\newtheorem{thm}{Theorem}[section]
\newtheorem{cor}[thm]{Corollary}
\newtheorem{lem}[thm]{Lemma}
\newtheorem{prop}[thm]{Proposition}
\theoremstyle{definition}
\newtheorem{defn}[thm]{Definition}
\theoremstyle{remark}
\newtheorem{exm}[thm]{Example}
\newtheorem{rem}[thm]{Remark}
\numberwithin{equation}{section}
\newcommand{\norm}[1]{\left\Vert#1\right\Vert}
\newcommand{\R}{\mathbb{R}}
\newcommand{\N}{\mathbb{N}}
\newcommand{\Z}{\mathbb{Z}}
\newcommand{\T}{\mathbb{T}}
\newcommand{\C}{\mathbb{C}}
\newcommand{\Cm}{\mathbb{C}^*}
\newcommand{\Rm}{\mathbb{R}_+}
\newcommand{\Zc}{\mathcal{Z}}
\newcommand{\Nc}{\mathcal{N}}
\newcommand{\tw}{\circledast}
\newcommand{\tc}{\circledast_\T}
\newcommand{\Sm}{\mathcal{S}}
\newcommand{\Zb}{\Zc_{b}^2(G,\Cm)}
\newcommand{\Zbw}{\Zc_{bw}^2(G,\Cm)}
\newcommand{\ZTb}{\Zc_{b}^2(G,\mathbb{T})}
\newcommand{\Hc}{\mathcal{H}}
\newcommand{\om}{\omega}
\newcommand{\Om}{\Omega}
\newcommand{\sg} {\sigma}
\newcommand{\la}{\langle}
\newcommand{\ra}{\rangle}
\begin{document}

\title[Twisted Orlicz algebras and complete isomorphism to operator algebras]
{Twisted Orlicz algebras and complete isomorphism to operator algebras}

\author[Serap \"{O}ztop]{Serap \"{O}ztop}
\address{Department of Mathematics, Faculty of Science, Istanbul University, Istanbul, Turkey}
\email{oztops@istanbul.edu.tr}

\author{Ebrahim Samei}
\address{Department of Mathematics and Statistics, University of Saskatchewan, Saskatoon, Saskatchewan, S7N 5E6, Canada}
\email{samei@math.usask.ca}

\author{Varvara Shepelska}
\address{Department of Mathematics and Statistics, University of Saskatchewan, Saskatoon, Saskatchewan, S7N 5E6, Canada}
\email{shepelska@gmail.com}

\footnote{{\it Date}: \today.

2010 {\it Mathematics Subject Classification.} Primary 46E30,  43A15, 43A20; Secondary 20J06.

{\it Key words and phrases.} Orlicz spaces, Young functions, 2-cocycles and 2-coboundaries, locally compact groups, twisted convolution, weights, groups with polynomial growth, operator algebras.

The second named author was partially supported by NSERC Grant no. 409364-2015 and 2221-Fellowship Program For Visiting Scientists And Scientists On Sabbatical Leave from Tubitak, Turkey. The third named author was also partially supported by a PIMS Postdoctoral Fellowship at the University of Saskatchewan.}











\maketitle

\begin{abstract}
Let G be a locally compact group, let $\Om:G\times G\to \Cm$ be a 2-cocycle, and let ($\Phi$,$\Psi$) be a complementary pair of strictly increasing continuous Young functions. It is shown in \cite{OS2} that $(L^\Phi(G),\tw)$ becomes an Arens regular dual Banach algebra if
\begin{align}\label{Eq:2-cocycle bdd sum-abstract}
|\Om(s,t)|\leq u(s)+v(t) \ \ \ (s,t\in G)
\end{align}
for some $u,v\in \Sm^\Psi(G)$. We prove if $L^\Phi(G)\subseteq L^2(G)$ and $u,v$ in \eqref{Eq:2-cocycle bdd sum-abstract} can be chosen to belong to $L^2(G)$, then $(L^\Phi(G),\tw)$ with the maximal operator space structure is completely isomorphic to an operator algebra. We also present further classes of 2-cocycles for which one could obtain such algebras generalizing in part the results of \cite{OS1}. We apply our methods to compactly generated group of polynomial growth and demonstrate that our results could be applied to variety of cases.
\end{abstract}

Orlicz spaces represent an important class of Banach function spaces considered in mathematical analysis. This class naturally arises as a generalization of $L^p$-spaces and contains, for example, the well-known Zygmund space $L \log^+ L$ which is a Banach space related to Hardy-Littlewood maximal functions. Orlicz spaces can also contain certain Sobolev spaces as subspaces. Linear properties of Orlicz spaces have been studied thoroughly (see \cite{rao} for example). However, until recently, little attention has been paid to their possible algebraic properties, particularly, if they are considered over translation-invariant measurable spaces. One reason might be that, on its own, an Orlicz space is rarely an algebra with respect to a natural product! For instance, it is well-known that for a locally compact group $G$, $L^p(G)$ ($1<p<\infty$) is an algebra under the convolution product exactly when $G$ is compact \cite{S}. Similar results have also been obtained for other classes of Orlicz algebras (see \cite{AM}, \cite{HKM}, \cite{S} for details).

The preceding results indicate that, in most cases, Orlicz spaces over locally compact groups are simply ``too big" to become algebras under convolution. However, it turned out that it is possible for ``weighted" Orlicz spaces to become algebras. In fact, weighted $L^p$-algebras and their properties have been studied by many authors including J. Wermer on the real line and
Yu. N. Kuznetsova on general locally compact groups (see, for example, \cite{K1}, \cite{K2}, \cite{KM}, \cite{W} and the references therein). These spaces have various properties and numerous applications in harmonic analysis. For instance, by applying the Fourier transform, we know that Sobolev spaces $W^{k,2}(\T)$ are nothing but certain weighted $l_\om^2(\Z)$ spaces.

Recently, in \cite{OO}, A. Osan\c{c}l{\i}ol and S. \"{O}ztop considered weighted Orlicz algebras over locally compact groups and studied their properties, extending, in part, the results of \cite{K1} and \cite{K2}. In \cite{OS1} and \cite{OS2}, the first two-named authors initiated a more general approach by considering the twisted convolution coming from a 2-cocycle $\Om$ with values in $\C^*$, the multiplicative group of complex numbers. It is shown in \cite{OS1} if ($\Phi$,$\Psi$) is a complementary pair of strictly increasing continuous Young functions and
\begin{align}\label{Eq:2-cocycle bdd sum-Introduction}
|\Om(s,t)|\leq u(s)+v(t) \ \ \ (s,t\in G).
\end{align}
for some $u,v\in L^\Psi(G)$, then $L^\Phi(G)$, together with the twisted convolution $\tw$ coming from $\Om$, becomes a Banach algebra or a Banach $*$-algebra
\cite[Theorems 3.3 and 4.5]{OS1}; we called them {\it twisted Orlicz algebras}.
These methods produce abundant families of symmetric Banach $*$-algebras in the form of twisted Orlicz algebras, mostly on compactly generated groups with polynomial growth \cite[Theorems 5.2 and 5.8]{OS1}. Moreover, if we can choose $u,v$ in \eqref{Eq:2-cocycle bdd sum-Introduction} in the subspace $\Sm^\Psi(G)$ of $L^\Psi(G)$ (see Section \ref{S:Prelim-Orlicz Space} for definition), then $(L^\Phi(G),\tw)$ becomes an Arens regular dual Banach algebra \cite[Theorem 4.2 and 5.3]{OS2}.

In this paper, we present a general method to obtain 2-cocycles on compactly generated groups with polynomial growth satisfying \eqref{Eq:2-cocycle bdd sum-Introduction}. More precisely, if $G$ is a compactly generated group of polynomial growth and $\tau$ is the length function on $G$ (see \eqref{Eq:length function}), then we consider those 2-cocycles $\Om$ for which $|\Om|$ is a 2-coboundary determined by weights of the from
$$\om(s)=e^{\rho(\tau(s))} \ \ \ \ \ (s\in G),$$
where $\rho:[0,\infty)\to [0,\infty)$ is an increasing concave function with $\rho(0)=0$. In this case, we show that for
$$u(s)=v(s)=e^{[\rho(2\tau(s))-2\rho(\tau(s))]} \ \ \ (s\in G),$$
$\Om$ satisfies the inequality \eqref{Eq:2-cocycle bdd sum-Introduction}. Moreover, we find criterions which enforce $u\in \Sm^\Psi(G)$ (see Theorems \ref{T:subexpo weight decreasing quotient1} and \ref{T:subexpo weight decreasing quotient}). This provided many families of Arens regular dual twisted Orlicz algebra extending, in part, the results of \cite{OS1} and \cite{OS2}. Furthermore, in Section \ref{S:Operator algebra}, we show that if $L^\Phi(G)\subseteq L^2(G)$ and $u,v$ in \eqref{Eq:2-cocycle bdd sum-Introduction} can be chosen to belong to $L^2(G)$, then $(L^\Phi(G),\tw)$ is satisfies a much stronger property namely, it becomes completely isomorphic to an operator algebra. Here the operator space structure on Orlicz spaces are the maximal one. We also present a wide range of examples of such twisted Orlicz algebras.
In particular, we obtain certain weighted $L^p$-spaces that are completely isomorphic to operators algebras.

\section{Preliminaries}\label{S:Prelim}

In this section, we give some definitions and state some technical results that will be crucial in the rest of this paper.


\subsection{Orlicz Spaces}\label{S:Prelim-Orlicz Space}

We start by recalling some facts concerning Young functions and Orlicz spaces. Our main reference is \cite{rao}.

A nonzero function $\Phi:[0,\infty) \to[0,\infty]$ is called a Young
function if $\Phi$ is convex, $\Phi(0)=0$, and $\lim_{x\to \infty} \Phi(x)=\infty$. For a Young function $\Phi$, the complementary function $\Psi$ of
$\Phi$ is given by
\begin{align}\label{Eq:Young function-complementary}
\Psi(y)=\sup\{xy-\Phi(x):x\ge0\}\quad(y\geq 0).
\end{align}
It is easy to check that $\Psi$ is again a Young function. Also, if $\Psi$ is the complementary function of $\Phi$, then $\Phi$ is
the complementary of $\Psi$ and $(\Phi,\Psi)$ is called a
complementary pair, and for such functions we have the following Young inequality:
\begin{align}\label{Eq:Young inequality}
xy\le\Phi(x)+\Psi(y)\quad(x,y\ge0).
\end{align}
A Young function can have value $\infty$ at a point,
and hence be discontinuous at such a point. However, we always consider the pair of complementary Young
functions $(\Phi,\Psi)$ with both $\Phi$ and $\Psi$ being continuous and strictly increasing. In particular, they attain positive values on $(0,\infty)$.

Now suppose that $G$ is a locally compact group with a fixed Haar measure $ds$ and $(\Phi,\Psi)$ is a complementary pair of Young functions. We define
\begin{align}\label{Eq:Orlicz defn-0}
\mathcal{L}^\Phi(G)=\left\{f:G\to\C:f \ \text{is measurable and}\  \int_G\Phi(|f(s)|)\,ds <\infty
\right\}.
\end{align}
Since $\mathcal{L}^\Phi(G)$ is not always a linear space, we define the Orlicz space $L^\Phi(G)$ to be
\begin{align}\label{Eq:Orlicz defn}
L^\Phi(G)=\left\{f:G\to\C:\int_G\Phi(\alpha|f(s)|)\,ds <\infty
\mbox{ for some }\alpha>0\right\},
\end{align}
where $f$ indicates a member in an equivalence class of measurable functions with respect to the Haar measure $ds$. The Orlicz space becomes a Banach space under the (Orlicz) norm $\|\cdot\|_\Phi$
defined for $f\in L^\Phi(G)$ by
\begin{align}\label{Eq:Orlicz norm}
\|f\|_\Phi=\sup\left\{\int_G|f(s)v(s)|\,ds: \int_G\Psi(|v(s)|)\,ds \le1\right\},
\end{align}
where $\Psi$ is the complementary function to $\Phi$. One can also
define the (Luxemburg) norm $N_\Phi(\cdot)$ on $L^\Phi(G)$ by
\begin{align}\label{Eq:Orlicz Luxemburg defn}
N_\Phi(f)=\inf\left\{k>0:\int_G\Phi\left(\frac{|f(s)|}{k}\right)
\,ds \le1\right\}.
\end{align}
It is known that these two norms are equivalent, that is,
\begin{align}\label{Eq:Orlicz norm-Luxemburg relation}
N_\Phi(\cdot)\le \|\cdot\|_\Phi\le2 N_\Phi(\cdot)
\end{align}
and
\begin{align}\label{Eq:Orlicz norm-defn relation}
N_\Phi(f)\le1 \ \ \text{if and only if}\ \ \int_G\Phi(|f(s)|)\,ds \le1.
\end{align}
Let $\Sm^\Phi(G)$ be the closure of the linear
space of all step functions in $L^\Phi(G)$. Then $\Sm^\Phi(G)$ is a Banach space
and contains $C_c(G)$, the space of all continuous functions on $G$ with compact support, as a dense subspace \cite[Proposition 3.4.3]{rao}. Moreover, $\Sm^\Phi(G)^*$, the dual of  $\Sm^\Phi(G)$, can be identified with $L^\Psi(G)$ in a natural way \cite[Theorem 4.1.6]{rao}. Another useful characterization of $\Sm^\Phi(G)$ is that
$f\in \Sm^\Phi(G)$ if and only if for every $\alpha>0$, $\alpha f\in \mathcal{L}^\Phi(G)$ \cite[Definition 3.4.2 and Proposition 3.4.3]{rao}.


%

In general, there is a straightforward method to construct various complementary pairs of strictly increasing continuous Young functions as described in
\cite[Theorem 1.3.3]{rao}. Suppose that $\varphi: [0,\infty)\to [0,\infty)$ is a continuous strictly increasing function with $\varphi(0)=0$ and
$\lim_{x\to \infty} \varphi(x)=\infty.$
Then $$\Phi(x)=\int_0^x \varphi(y)dy$$ is a continuous strictly increasing Young function and
$$\Psi(y)=\int_0^y \varphi^{-1}(x)dx$$
is the complementary Young function of $\Phi$ which is also continuous and strictly increasing.
Here $\varphi^{-1}(x)$ is the inverse function of $\varphi$. Here are several families of examples satisfying the above construction (see \cite[Proposition 2.11]{ML} and \cite[Page 15]{rao} for more details):

$(1)$ For $1< p,q<\infty$ with $\frac{1}{p}+\frac{1}{q}=1$, if $\Phi(x)=\frac{x^p}{p}$, then $\Psi(y)=\frac{y^q}{q}$. In this case,
the space $L^{\Phi}(G)$ becomes the Lebesgue space $L^p(G)$ and the norm $\|\cdot\|_{\Phi}$ is equivalent to the
classical norm $\|\cdot\|_{p}$.

(2) If $\Phi(x)=x\ln (1+x)$, then $\Psi(x) \approx \cosh x-1$.


(3) If $\Phi(x)=e^x-x-1$, then $\Psi(x)=(1+x)\ln(1+x)-x$.


\subsection{2-Cocycles and 2-Cobounaries}

Throughout this paper, we use the following notation: $\Cm$ denotes the multiplicative group of complex numbers, i.e. $\Cm=\C \setminus \{0\}$,
$\Rm$ stands for the multiplicative group of positive real numbers, and $\T$ denotes the unit circle in $\C$.

\begin{defn}\label{D:2-cocycle defn}
Let $G$ and $H$ be locally compact groups such that $H$ is abelian. A {\it (normalized) 2-cocycle on $G$ with values in $H$} is a Borel
measurable map $\Om: G\times G \to H$ such that
\begin{align}\label{Eq:2-cocycle relation}
\Om(r,s)\Om(rs,t)=\Om(s,t)\Om(r,st) \ \ \  (r,s,t \in G)
\end{align}
and
\begin{align}\label{Eq:2-cocycle relation normalization}
\Om(r,e_G)=\Om(e_G,r)=e_H \ \ \ (r\in G).
\end{align}
The set of all normalized 2-cocycles will be denoted by $\Zc^2(G,H)$.
\end{defn}
If $\om: G \to H$ is measurable with $\om(e_G)=e_H$, then it is easy to see that the mapping
$$(s,t)\mapsto \om(st)\om(s)^{-1}\om(t)^{-1}$$
satisfies \eqref{Eq:2-cocycle relation} and \eqref{Eq:2-cocycle relation normalization}. Hence, it is a
2-cocycle; such maps are called \mbox{\it 2-coboundaries}. The set of 2-coboundaries will be denoted by $\Nc^2(G,H)$.
It is easy to check that $\Zc^2(G,H)$ is an abelian group under the product
$$\Om_1 \Om_2 (s,t)=\Om_1(s,t)\Om_2(s,t) \ \ (s,t\in G),$$
and $\Nc^2(G,H)$ is a (normal) subgroup of $\Zc^2(G,H)$. This, in particular, implies that
$$\Hc^2(G,H):=\Zc^2(G,H)/\Nc^2(G,H)$$ turns into a group. The latter is called the {\it 2$^{nd}$ group cohomology of $G$ into~$H$}
with the trivial actions (i.e. $s\cdot \alpha=\alpha\cdot s=\alpha$ for all $s\in G$ and $\alpha \in H$).

We are mainly interested in the cases when $H$ is $\Cm$, $\Rm$, or $\T$. One essential observation is that
we can view $\Cm=\Rm \T$ as a (pointwise) direct product of groups. Hence, for any 2-cocycle
$\Om$ on $G$ with values in $\Cm$ and $s,t\in G$, we can (uniquely) write
$\Om(s,t)=|\Om(s,t)| e^{i\theta}$ for some $0\leq \theta <2\pi$. Therefore, if we put
\begin{align}
|\Om|(s,t):=|\Om(s,t)| \ \ \text{and}\ \ \Om_\T(s,t):=e^{i\theta},
\end{align}
then $\Om=|\Om|\Om_\T$ (in a unique way) and the mappings $|\Om|$ and $\Om_\T$ are \mbox{2-cocycles}
on $G$ with values in $\Rm$ and $\T$ respectively.





\subsection{Twisted Orlicz algebras}\label{S:Twisted Orlicz alg}

In this section we present and summarize what we need from the theory of twisted Orlicz algebras. These are taken from \cite{OS1}. Throughout this section, $G$ is a locally compact group with a fixed left Haar measure $ds$.

\begin{defn}\label{D:bound 2 cocycles}
We denote $\Zb$ to be the group of {\it bounded 2-cocycles on $G$ with values in $\Cm$}
which consists of all element $\Om\in \Zc^2(G,\Cm)$ satisfying the following conditions:\\
$(i)$ $\Om \in L^\infty(G\times G)$;\\
$(ii)$ $\Om_\T$ is continuous.\\
We also define $\Zbw$ to be the subgroup of $\Zb$ consisting of elements $\Om \in \Zb$ for which
$$|\Om|(s,t)=\frac{\om(st)}{\om(s)\om(t)} \ \ \ (s,t\in G),$$ where $\om : G \to \R_+$ is a locally integrable measurable function
with $\om(e)=1$ and $1/\om\in L^\infty(G)$. In this case, we call $\om$ a {\it weight} on $G$ and say that $|\Om|$ is the {\it 2-coboundary determined by} $\om$, or alternatively, $\om$ is the {\it weight associated to} $|\Om|$. We also say that $\om$ is {\it symmetric} if $\om(s)=\om(s^{-1})$ for all $s\in G$.
\end{defn}

Now suppose that $\Om\in \Zb$ and $f$ and $g$ are measurable functions on $G$.
We define the {\it twisted convolution of $f$ and $g$ under $\Om$} to be
\begin{align}\label{Eq:twisted convolution}
f\tw g (t)=\int_G f(s)g(s^{-1}t)\Om(s,s^{-1}t)ds  \ \ \ (t\in G)
\end{align}
It follows routinely that for every $f,g\in L^1(G)$,
$f\tw g\in L^1(G)$ with $\|f\tw g\|_1\leq \|\Om\|_\infty \|f\|_1\|g\|_1$.
We conclude that $(L^1(G),\tw)$ becomes a Banach algebra; it is called
the {\it twisted group algebra} (see \cite[Section 2]{OS1} for more details).

\begin{defn}
Let $\Om\in \Zb$ and $\tw$ be the twisted convolution coming from $\Om$. We say
that $(L^\Phi(G),\tw)$ is a {\it twisted Orlicz algebra} if $(L^\Phi(G),\tw, \|\cdot\|_\Phi)$ is a Banach algebra, i.e. there is $C>0$ such that
for every $f,g\in L^\Phi(G)$, $f\tw g\in L^\Phi(G)$ with
$$ \|f\tw g\|_\Phi \leq C\|f\|_\Phi \|g\|_\Phi.$$
\end{defn}

In \cite[Lemma 3.2 and Theorem 3.3]{OS1}, sufficient conditions on $\Om$ were found under which the twisted convolution \eqref{Eq:twisted convolution} turns an Orlicz space to a twisted Orlicz algebra.

\begin{thm}\label{T:twisted Orlicz alg}
Let $G$ be a locally compact group and $\Om\in \Zb$.\\
$(i)$ $L^\Phi(G)$ is a Banach $L^1(G)$-bimodule with respect to the twisted convolution \eqref{Eq:twisted convolution} having $\Sm^\Phi(G)$ as an essential Banach $L^1(G)$-submodule.\\
$(ii)$ Suppose that there exist non-negative measurable functions $u$ and $v$
in $L^\Psi(G)$ such that
\begin{align}\label{Eq:2-cocycle bdd sum}
|\Om(s,t)|\leq u(s)+v(t) \ \ \ (s,t\in G).
\end{align}
Then
for every $f,g\in L^\Phi(G)$, the twisted convolution \eqref{Eq:twisted convolution}
is well-defined on $L^\Phi(G)$
so that $(L^\Phi(G),\tw)$ becomes a twisted Orlicz algebra having $\Sm^\Phi(G)$ as a closed subalgebra.
\end{thm}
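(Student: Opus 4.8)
The plan is to reduce both parts to two elementary features of the Orlicz norm \eqref{Eq:Orlicz norm}: it is monotone under pointwise domination (if $|h_1|\le|h_2|$ a.e.\ then $\|h_1\|_\Phi\le\|h_2\|_\Phi$, immediate from the definition), and left translation $L_s\colon h\mapsto h(s^{-1}\cdot)$ is an isometry of $L^\Phi(G)$ (since $ds$ is left invariant, the constraint set $\{v:\int_G\Psi(|v|)\le1\}$ is preserved under $v\mapsto v(s\cdot)$). I will also repeatedly use the H\"older inequality $\int_G|hw|\,ds\le\|h\|_\Phi N_\Psi(w)$ pairing $L^\Phi(G)$ with $L^\Psi(G)$, which together with \eqref{Eq:Orlicz norm-defn relation} gives $\int_G|hw|\le\|h\|_\Phi$ whenever $\int_G\Psi(|w|)\le1$.

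For part (i) I would obtain the module estimate by duality, so as to avoid vector-valued integration. Fix $f\in L^1(G)$, $g\in L^\Phi(G)$ and $w$ with $\int_G\Psi(|w|)\le1$. Using $|\Om|\le\|\Om\|_\infty$, Tonelli on the nonnegative integrand, and $g(s^{-1}t)=(L_sg)(t)$,
\begin{align*}
\int_G|f\tw g(t)|\,|w(t)|\,dt\le\|\Om\|_\infty\int_G|f(s)|\Big(\int_G|(L_sg)(t)|\,|w(t)|\,dt\Big)\,ds\le\|\Om\|_\infty\|f\|_1\|g\|_\Phi,
\end{align*}
since the inner integral is at most $\|L_sg\|_\Phi=\|g\|_\Phi$; taking the supremum over $w$ gives $\|f\tw g\|_\Phi\le\|\Om\|_\infty\|f\|_1\|g\|_\Phi$ and, in particular, a.e.\ absolute convergence of the defining integral. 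The right action is treated the same way, except that the substitution producing $g(s^{-1}t)$ brings in the modular function; on a unimodular group this is harmless, and in general it is tracked in the $L^1$ factor. Bimodule associativity follows by a direct change of variables from the $2$-cocycle identity \eqref{Eq:2-cocycle relation}. For essentiality, I would take a bounded approximate identity $(e_\alpha)\subseteq C_c(G)$ for $L^1(G)$ supported in shrinking neighbourhoods of $e$ and show $e_\alpha\tw g\to g$ in $\|\cdot\|_\Phi$; by density of $C_c(G)$ in $\Sm^\Phi(G)$ it suffices to check this for $g\in C_c(G)$, where the normalization $\Om(e,\cdot)=1$ and the continuity of $\Om_\T$ force the twist factor $\Om(s,s^{-1}t)$ close to $1$ uniformly on the relevant compact set.

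For part (ii) the decisive step is to push the cocycle bound onto an $L^1$ factor. Substituting $|\Om(s,s^{-1}t)|\le u(s)+v(s^{-1}t)$ into the defining integral yields the pointwise domination
\begin{align*}
|f\tw g(t)|\le\big((|f|u)*|g|\big)(t)+\big(|f|*(|g|v)\big)(t),
\end{align*}
where $*$ denotes ordinary convolution (the case $\Om\equiv1$ of part (i)). The H\"older inequality turns $f\in L^\Phi(G)$, $u\in L^\Psi(G)$ into $|f|u\in L^1(G)$ with $\||f|u\|_1\le C\|f\|_\Phi\|u\|_\Psi$, and likewise $|g|v\in L^1(G)$; part (i) then places both summands in $L^\Phi(G)$ and bounds their norms. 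Because the right-hand side is finite for a.e.\ $t$, the twisted convolution is well defined, and monotonicity of $\|\cdot\|_\Phi$ with the triangle inequality gives
\begin{align*}
\|f\tw g\|_\Phi\le\big\|(|f|u)*|g|\big\|_\Phi+\big\||f|*(|g|v)\big\|_\Phi\le C\big(\|u\|_\Psi+\|v\|_\Psi\big)\|f\|_\Phi\|g\|_\Phi,
\end{align*}
the required algebra inequality. That $\Sm^\Phi(G)$ is a closed subalgebra follows since $C_c(G)\tw C_c(G)$ consists of bounded functions with compact support, hence lies in $\Sm^\Phi(G)$, and $\tw$ is continuous by the estimate just proved; density of $C_c(G)$ together with closedness of $\Sm^\Phi(G)$ finishes the argument.

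I expect the main obstacle to sit inside part (i): establishing the Young-type convolution inequality on $L^\Phi(G)$ cleanly, in particular correctly tracking the modular function in the right action (the left action is free of it thanks to the translation isometry, and everything is transparent when $G$ is unimodular, which covers the polynomial-growth examples of interest). A secondary technical point is the approximate-identity convergence $e_\alpha\tw g\to g$, where one must control the twist uniformly in order to pass from pointwise smallness to smallness in $\|\cdot\|_\Phi$. Once these module facts are in place, part (ii) is merely the decomposition above followed by H\"older and monotonicity, and presents no real difficulty.
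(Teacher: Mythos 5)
First, a point of reference: this paper never proves Theorem \ref{T:twisted Orlicz alg} --- it is imported verbatim from \cite[Lemma 3.2 and Theorem 3.3]{OS1} --- so the only argument in this document to measure yours against is the proof of Theorem \ref{T:twisted Orlicz alg-operator alg}, which re-uses the same splitting of the twisted convolution. Your skeleton is indeed the intended one: translation invariance and duality for the $L^1(G)$-module bounds, then the pointwise domination $|f\tw g|\leq (|f|u)*|g|+|f|*(|g|v)$, Orlicz--H\"older to place $|f|u$ and $|g|v$ in $L^1(G)$, and the module bounds to conclude; your treatment of $\Sm^\Phi(G)$ as a closed subalgebra is also fine. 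However, two of your steps have genuine gaps.

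The serious one is your treatment of the modular function, which you dismiss as ``tracked in the $L^1$ factor''. It cannot be. Your left-action estimate works because the constraint set $\set{w:\int_G\Psi(|w|)\leq 1}$ is invariant under left translation; for the right action it is not, since $\int_G\Psi(|w(st)|)\,ds=\Delta(t)^{-1}\int_G\Psi(|w(s)|)\,ds$. Running your own duality computation for $g\tw f$ yields only
\[
\|g\tw f\|_\Phi\ \leq\ \|\Om\|_\infty\,\|g\|_\Phi\int_G |f(t)|\max\bigl(1,\Delta(t)^{-1}\bigr)\,dt,
\]
and this weighted integral is not comparable to $\|f\|_1$ on a non-unimodular group. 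The defect is real, not notational: already for $\Phi(x)=x^p/p$ with $p>1$ and $G$ the $ax+b$ group, right convolution by $L^1(G)$ is unbounded on $L^p(G)$ (test against $f$ concentrated near a point $s_0$; then $\|g*f\|_p\approx \Delta(s_0)^{-1/p'}\|g\|_p\|f\|_1$, which blows up as $\Delta(s_0)\to 0$). This leaves the right-module half of part (i) unproven and, more importantly, undermines part (ii): your second term is precisely a right-module estimate $\||f|*(|g|v)\|_\Phi\leq C\|f\|_\Phi\,\||g|v\|_1$, so your argument delivers the algebra inequality only when $G$ is unimodular, or under the stronger hypothesis $v\max(1,\Delta^{-1})\in L^\Psi(G)$ in place of $v\in L^\Psi(G)$. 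Unimodularity covers every polynomial-growth application in this paper, but it is not in the statement you set out to prove, and nothing in your write-up closes the general case.

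The second gap is the essentiality of $\Sm^\Phi(G)$. You argue that $e_\alpha\tw g\to g$ because the normalization and the continuity of $\Om_\T$ force $\Om(s,s^{-1}t)$ close to $1$; but membership in $\Zb$ makes only $\Om_\T$ continuous, while $|\Om|$ is merely a bounded measurable 2-cocycle. Take $G=\R$, let $E$ be a fat Cantor set having $0$ as a Lebesgue density point, let $\om=1+\chi_E$ (normalized at $0$), and let $\Om=|\Om|$ be the coboundary of $\om$, so $\Om\in\Zbw$ with $\Om_\T\equiv 1$. With $e_\alpha=\frac{1}{2\alpha}\chi_{[-\alpha,\alpha]}$ one has $e_\alpha\tw g=\om\cdot\bigl((e_\alpha/\om)*(g/\om)\bigr)$, and since $\int e_\alpha/\om\to\tfrac12+\tfrac12\cdot\tfrac12$ fails to be $1$ (it tends to $\tfrac12\cdot 1+\tfrac12\cdot\tfrac12=\tfrac34$ computed off and on $E$; with density one at $0$ the limit is $\tfrac12$ off the complement correction), one checks that $e_\alpha\tw g\to c\,g$ in $N_\Phi$ for $g\in C_c(\R)$ with a constant $c\neq 1$. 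So the approximate identity converges to the wrong element, and your claimed convergence $e_\alpha\tw g\to g$ is false. Essentiality itself may well survive --- in this example the limit is a nonzero scalar multiple of $g$ --- but any correct proof must cope with $|\Om|$ having no continuity whatsoever, which your argument does not.
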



\section{Twisted Orlicz algebras on groups with polynomial growth}\label{S:Twisted Orlicz alg-PG}

Let $G$ be a compactly generated group with a fixed compact symmetric generating neighborhood $U$ of the identity.
$G$ is said to have {\it polynomial growth} if there exist $C>0$ and $d\in \N$ such that for every $n\in \N$
	$$\lambda(U^n)\leq Cn^d \ \ \ (n\in \N).$$
Here $\lambda(S)$ is the Haar measure of any measurable $S\subseteq G$ and
	$$U^n=\{u_1\cdots u_n : u_i\in U, i=1,\ldots, n \}.$$
The smallest such $d$ is called {\it the order of growth} of $G$ and is denoted by~$d(G)$.
It can be shown that the order of growth of $G$ does not depend on the symmetric generating set $U$, i.e. it is a universal constant for $G$.

It is immediate that compact groups are of polynomial growth. More generally, every $G$ with the property that the conjugacy class of
every element in $G$ is relatively compact has polynomial growth \cite[Theorem 12.5.17]{Pal}. Also every (compactly generated) nilpotent
group (hence an abelian group) has polynomial growth \cite[Theorem 12.5.17]{Pal}.

Using the generating set $U$ of $G$ we can define a {\it length function} $\tau_U : G \to [0, \infty)$ by
\begin{align}\label{Eq:length function}
\tau_U(s)=\inf \{n\in \N : s\in U^n \} \ \ \text{for} \ \ s \neq e, \ \ \tau_U(e)=0.
\end{align}
When there is no fear of ambiguity, we write $\tau$ instead of $\tau_U$.
It is straightforward to verify that $\tau$ is a symmetric subadditive function on $G$, i.e.
\begin{align}\label{Eq:lenght func-trai equality}
\tau(st)\leq \tau(s)+\tau(t) \ \ \text{and} \ \ \tau(s)=\tau(s^{-1})\ \ (s,t\in G).
\end{align}
The length function $\tau$ can be used to construct many classes of weights on~$G$. In fact, if $\rho:\N\cup \{0\}\to \R^+$ is an increasing subadditive function with $\rho(0)=0$, then
\begin{align}\label{Eq:weight-lenght function}
\om(s)=e^{\rho(\tau(s))} \ \ \ \ \ (s\in G)
\end{align}
is a weight on $G$. For instance, for every $0< \alpha < 1$, $\beta > 0$, $\gamma >0$, and $C>0$,
we can define the {\it polynomial weight} $\om_\beta$ on $G$ of order $\beta$ by
\begin{align}\label{Eq:poly weight-defn}
\om_\beta(s)=(1+\tau(s))^\beta  \ \ \ \ (s\in G),
\end{align}
and the {\it subexponential weights} $\sg_{\alpha, C}$ and $\nu_{\beta,C}$ on $G$ by
\begin{align}\label{Eq:Expo weight-defn}
\sg_{\alpha,C}(s)=e^{C\tau(x)^\alpha} \ \ \ \ (s\in G),
\end{align}
\begin{align}\label{Eq:Expo weight II-defn}
\nu_{\gamma,C}(s)=e^\frac{C\tau(s)}{(\ln (1+\tau(s)))^\gamma} \ \ \ \ (s\in G).
\end{align}

In \cite[Chapter 5]{OS1} it is shown how one could apply condition \eqref{Eq:2-cocycle bdd sum} and Theorem \ref{T:twisted Orlicz alg} to obtain twisted Orlicz algebras to compactly generated groups of polynomial growth. In this section we will extend the methods presented in \cite[Chapter 5]{OS1} and find a general criterion that can be applied to weights of the form \eqref{Eq:weight-lenght function}. First, we need the following lemma.

\begin{lem}\label{L:weight decreasing quotient}
Suppose that $\sg$ is a weight on $\N_0:=\N \cup \{0\}$ such that the sequence
\[
\left\{\frac{\sg(n+1)}{\sg(n)}\right\}_{n\in \N_0}
\]
is decreasing.
Then for all $m,n\in \N_0$,
\begin{align}\label{Eq:weight decreasing quotient-1}
\frac{\sg(m+n)}{\sg(m)\sg(n)}\leq \frac{\sg(2m)}{\sg(m)^2}+\frac{\sg(2n)}{\sg(n)^2}.
\end{align}
\end{lem}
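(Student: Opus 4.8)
The plan is to exploit the symmetry of the asserted inequality and reduce it to a sharper one-term estimate. Since both sides of \eqref{Eq:weight decreasing quotient-1} are symmetric in $m$ and $n$, I would assume without loss of generality that $m \le n$. I then claim that it suffices to prove the stronger bound
\[
\frac{\sg(m+n)}{\sg(m)\sg(n)} \le \frac{\sg(2m)}{\sg(m)^2},
\]
because the remaining summand $\sg(2n)/\sg(n)^2$ on the right-hand side of \eqref{Eq:weight decreasing quotient-1} is strictly positive. It is essential here that I retain the term indexed by $m=\min(m,n)$ rather than by $n$: the analogous estimate with $\sg(2n)/\sg(n)^2$ in place of $\sg(2m)/\sg(m)^2$ actually fails, since the increments of $\log\sg$ are decreasing and point in the opposite direction.

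Next I would clear denominators and rewrite the target inequality as $\sg(m+n)\sg(m)\le\sg(2m)\sg(n)$, or equivalently as $\sg(m+n)/\sg(n)\le\sg(2m)/\sg(m)$. The natural tool is the decreasing ratio sequence $r_k:=\sg(k+1)/\sg(k)$ supplied by the hypothesis. Using $\sg(0)=1$ and telescoping, every quotient $\sg(b)/\sg(a)$ with $a\le b$ equals $\prod_{k=a}^{b-1}r_k$, so the inequality I must establish takes the form
\[
\prod_{k=n}^{n+m-1}r_k \le \prod_{k=m}^{2m-1}r_k.
\]

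Finally, I would observe that each product above consists of exactly $m$ consecutive factors, and pair them index by index as $r_{n+j}$ against $r_{m+j}$ for $j=0,\ldots,m-1$. Since $n\ge m$ gives $n+j\ge m+j$, and $\{r_k\}$ is decreasing, each factor satisfies $r_{n+j}\le r_{m+j}$; multiplying these $m$ inequalities yields the displayed product estimate and hence the lemma. The degenerate cases $m=0$ (empty products, both sides equal to $1$) and $m=n$ (the two products coincide) are immediate. I do not expect a genuine obstacle in this argument; the only real subtlety, and the point one can easily get wrong, is recognizing that the symmetric sum on the right is not sharp and selecting the correct single term, namely the one attached to $\min(m,n)$, so that the monotonicity of $\{r_k\}$ works in one's favour.
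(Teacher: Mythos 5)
Your proof is correct and takes essentially the same route as the paper's: both reduce by symmetry to the sharper one-term bound $\frac{\sg(m+n)}{\sg(m)\sg(n)}\le \frac{\sg(2k)}{\sg(k)^2}$ with $k=\min(m,n)$, and both establish it by a termwise comparison of two products of $k$ consecutive ratios $\sg(j+1)/\sg(j)$, using that these ratios decrease. The paper packages the computation with products $\mu_j$ indexed from zero instead of your telescoping quotients, but the underlying pairing argument is identical.
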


\begin{proof}
Clearly, the result follows if either $m$ or $n$ is zero.
For simplicity, let
$$\mu_0=1 \ \text{and}\ \mu_n=\frac{\sg(n)}{\sg(n-1)} \ \ \ (n\in \N).$$
Suppose that $m\geq n\geq 1$. Then, by our assumption, we have that $\mu_{m+k}\leq \mu_{n+k}$ for all $0\le k\le n$. Hence,
$$ \prod\limits_{k=0}^{n}\mu_k \prod\limits_{k=1}^{n}\mu_{m+k} \leq \prod\limits_{k=0}^{2n} \mu_k.$$
Therefore,
$$ \prod\limits_{k=0}^{n}\mu_k \prod\limits_{k=0}^{m+n}\mu_{k} \leq  \prod\limits_{k=0}^{m}\mu_k \prod\limits_{k=0}^{2n} \mu_k,$$
or, equivalently,
$$\sg(n)\sg(m+n)\leq \sg(m)\sg(2n).$$
The last inequality implies that
$$\frac{\sg(m+n)}{\sg(m)\sg(n)}\leq \frac{\sg(2n)}{\sg(n)^2}.$$
Similarly, when  $n\geq m\geq 1$, we get
$$\frac{\sg(m+n)}{\sg(m)\sg(n)}\leq \frac{\sg(2m)}{\sg(m)^2}.$$
Putting the above two relations together, we get \eqref{Eq:weight decreasing quotient-1}.
\end{proof}

In order to obtain twisted Orlicz algebras, a critical part of our argument is the condition \eqref{Eq:2-cocycle bdd sum}. Even though this condition might be difficult to verify in general, we present a simple yet useful decomposition which can be applied to a wide class of weights.

\begin{thm}\label{T:subexpo weight decreasing quotient1}
Let $G$ be a compactly generated group of polynomial growth, $\rho:[0,\infty)
\to [0,\infty)$ be an increasing concave function with $\rho(0)=0$,
and $\om$ be the weight on $G$ defined in \eqref{Eq:weight-lenght function}.
Then, for every $s,t\in G$,
\begin{align}\label{Eq:subexpo weight decreasing quotient-1}
\frac{\om(st)}{\om(s)\om(t)}\leq u(s)+u(t),
\end{align}
where
\begin{align}\label{Eq:subexpo weight decreasing quotient-2}
u(s)=e^{[\rho(2\tau(s))-2\rho(\tau(s))]} \ \ \ (s\in G).
\end{align}
\end{thm}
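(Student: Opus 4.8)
The plan is to reduce the continuous inequality to the discrete Lemma \ref{L:weight decreasing quotient} by passing through the length function $\tau$. The key observation is that $\om(s) = e^{\rho(\tau(s))}$ depends on $s$ only through $\tau(s)$, so if I define a function on $\N_0$ by $\sg(n) = e^{\rho(n)}$, then the left-hand side of \eqref{Eq:subexpo weight decreasing quotient-1} is controlled by the corresponding expression in $\sg$ evaluated at $\tau(s)$ and $\tau(t)$. First I would set $m = \tau(s)$ and $n = \tau(t)$, which are nonnegative integers, and use subadditivity $\tau(st) \leq \tau(s) + \tau(t) = m+n$ from \eqref{Eq:lenght func-trai equality}. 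Since $\rho$ is increasing, this gives $\rho(\tau(st)) \leq \rho(m+n)$, hence
\[
\frac{\om(st)}{\om(s)\om(t)} = e^{\rho(\tau(st)) - \rho(\tau(s)) - \rho(\tau(t))} \leq e^{\rho(m+n) - \rho(m) - \rho(n)} = \frac{\sg(m+n)}{\sg(m)\sg(n)}.
\]

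Next I would verify that $\sg(n) = e^{\rho(n)}$ satisfies the hypothesis of Lemma \ref{L:weight decreasing quotient}, namely that the quotients $\sg(n+1)/\sg(n) = e^{\rho(n+1) - \rho(n)}$ form a decreasing sequence. This is exactly where concavity of $\rho$ enters: concavity on $[0,\infty)$ implies that the increments $\rho(n+1) - \rho(n)$ are nonincreasing in $n$, so the exponentials are decreasing. I should also check that $\sg$ is genuinely a weight on $\N_0$ in the required sense — subadditivity of $\rho$ (which follows from concavity together with $\rho(0)=0$) yields $\rho(m+n) \leq \rho(m) + \rho(n)$, giving $\sg(m+n) \leq \sg(m)\sg(n)$, and $\sg(0) = e^{\rho(0)} = 1$.

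With the hypothesis verified, I would apply \eqref{Eq:weight decreasing quotient-1} to bound $\sg(m+n)/(\sg(m)\sg(n))$ by $\sg(2m)/\sg(m)^2 + \sg(2n)/\sg(n)^2$. Translating back, $\sg(2m)/\sg(m)^2 = e^{\rho(2m) - 2\rho(m)} = e^{\rho(2\tau(s)) - 2\rho(\tau(s))} = u(s)$, and similarly the other term equals $u(t)$. Chaining the displayed inequalities then produces exactly \eqref{Eq:subexpo weight decreasing quotient-1}, so the proof is complete.

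**The main obstacle** I anticipate is the passage from concavity of $\rho$ as a continuous function on $[0,\infty)$ to the discrete monotonicity of the increments that Lemma \ref{L:weight decreasing quotient} requires; I must be careful that the concavity hypothesis is stated on the half-line while the lemma lives on $\N_0$, so I need to confirm that restricting a concave function with $\rho(0)=0$ to the integers preserves both subadditivity and the decreasing-increment property. The rest of the argument is essentially bookkeeping, rewriting exponentials, and invoking the already-established lemma, so this conceptual bridge is the only genuinely load-bearing step.
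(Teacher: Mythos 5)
Your proposal is correct and follows essentially the same route as the paper's proof: define $\sg(n)=e^{\rho(n)}$ on $\N_0$, use concavity of $\rho$ to show the quotients $\sg(n+1)/\sg(n)$ decrease so that Lemma \ref{L:weight decreasing quotient} applies, and combine this with subadditivity of $\tau$ and monotonicity of $\rho$ to pass from $\om(st)$ to $\sg(\tau(s)+\tau(t))$. The only (harmless) difference is that you also verify $\sg$ is genuinely a weight on $\N_0$ (submultiplicativity and $\sg(0)=1$), a point the paper leaves implicit.
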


\begin{proof}
Let $\sg:[0,\infty) \to [0,\infty)$ be the function defined by
$$\sg(x)=e^{\rho(x)} \ \ \ (x\geq 0).$$
Since $\rho$ is concave, for every $n\in \N_0$, we have
$$\frac{\rho(n+2)+\rho(n)}{2}\leq \rho(n+1).$$
Therefore, $\left\{\frac{\sg(n+1)}{\sg(n)}\right\}_{n\in \N_0}=\{e^{\rho(n+1)-\rho(n)}\}_{n\in \N_0}$ is decreasing and so, by Lemma~\ref{L:weight decreasing quotient}, $\sg_{|_{\N_0}}$ satisfies
\eqref{Eq:weight decreasing quotient-1}.
Now take $s,t\in G$ and put $m=\tau(s)$ and $n=\tau(t)$. Since $\rho$ is increasing
and $\tau(st)\leq \tau(s)+\tau(t)$, we have
$$\om(st)=e^{\rho(\tau(st))}\leq e^{\rho(m+n)}=\sg(m+n).$$
Therefore,
$$\frac{\om(st)}{\om(s)\om(t)}\leq \frac{\sg(m+n)}{\sg(m)\sg(n)}
\leq \frac{\sg(2m)}{\sg(m)^2}+\frac{\sg(2n)}{\sg(n)^2}=u(s)+u(t).$$
\end{proof}







We are now ready to present the main result of this section. 

\begin{thm}\label{T:subexpo weight decreasing quotient}
Let $G$, $\rho$, and $\om$ be as in Theorem \ref{T:subexpo weight decreasing quotient1} and $\Om_\T\in\ZTb$. Suppose that
$u\in \Sm^\Psi(G)$, where $u$ is the function defined in \eqref{Eq:subexpo weight decreasing quotient-2}. Then $(L^\Phi(G), \tw)$ is
an Arens regular dual Banach algebra. This, in particular, happens in either of the following cases:\\
$(i)$ $u\om\in L^\infty(G)$ and $\om^{-1}\in \Sm^\Psi(G)$;\\
$(ii)$ $\rho$ is differentiable on $\R^+$ and the function
\begin{align}\label{Eq:subexpo weight decreasing quotient-diff}
v(s)=e^{[\tau(s)^2 q'(\tau(s))]} \ \ \ (s\in G)
\end{align}
belongs to $\Sm^\Psi(G)$, where $q(x)=\rho(x)/x$ on $\R^+$;\\
$(iii)$ $\rho$ is twice-differentiable on $\R^+$ and
\begin{align}\label{Eq:subexpo weight decreasing quotient-second derv}
\lim_{x\to \infty} x^2\rho''(x)<-d/l,
\end{align}
\noindent where $d:=d(G)$ is the order of growth of $G$ and
$l\geq 1$ is such that $\lim_{x\to 0^+}\frac{\Psi(x)}{x^l}$ exists. Here we may allow $\lim_{x\to \infty} x^2\rho''(x)$ to be $-\infty$.
\end{thm}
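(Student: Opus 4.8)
The plan is to reduce the main assertion to the cited results of \cite{OS2}, and then to treat $(i)$--$(iii)$ as successively more concrete sufficient conditions for the single hypothesis $u\in\Sm^\Psi(G)$. Writing $\Om=|\Om|\,\Om_\T$, where $|\Om|$ is the $2$-coboundary determined by $\om$ and $\Om_\T\in\ZTb$, I would first record that $\Om\in\Zbw$: since $\rho$ is concave with $\rho(0)=0$ we have $\rho(2x)\le 2\rho(x)$, so the function $u$ in \eqref{Eq:subexpo weight decreasing quotient-2} satisfies $0\le u\le 1$; combined with Theorem \ref{T:subexpo weight decreasing quotient1} this gives $|\Om(s,t)|=\om(st)/(\om(s)\om(t))\le u(s)+u(t)\le 2$, so $|\Om|\in L^\infty(G\times G)$ and hence $\Om\in\Zb$. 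With $u=v$ serving as both dominating functions in \eqref{Eq:2-cocycle bdd sum}, the hypothesis $u\in\Sm^\Psi(G)$ places us exactly in the situation of \cite[Theorems 4.2 and 5.3]{OS2}, from which the conclusion that $(L^\Phi(G),\tw)$ is an Arens regular dual Banach algebra follows at once.

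It remains to verify $u\in\Sm^\Psi(G)$ under each of $(i)$--$(iii)$. Throughout I would use the characterisation that $f\in\Sm^\Psi(G)$ iff $\alpha f\in\mathcal{L}^\Psi(G)$ for every $\alpha>0$, which yields a domination principle: if $0\le u\le h$ pointwise with $h\in\Sm^\Psi(G)$, then $\int_G\Psi(\alpha u)\le\int_G\Psi(\alpha h)<\infty$ for all $\alpha$, so $u\in\Sm^\Psi(G)$. For $(i)$, boundedness of $u\om$ gives $u\le M\om^{-1}$ for some $M>0$, and domination by $\om^{-1}\in\Sm^\Psi(G)$ finishes the case. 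For $(ii)$ I would establish the pointwise bound $u\le v$ and again invoke domination. Setting $x=\tau(s)$, the exponent of $u$ is $\rho(2x)-2\rho(x)$ while, simplifying $x^2q'(x)$ with $q(x)=\rho(x)/x$, that of $v$ is $x\rho'(x)-\rho(x)$. Since $\rho'$ is decreasing by concavity, $\rho(2x)-\rho(x)=\int_x^{2x}\rho'(t)\,dt\le x\rho'(x)$, whence $\rho(2x)-2\rho(x)\le x\rho'(x)-\rho(x)$ and therefore $u\le v$.

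For $(iii)$ I would reduce to $(ii)$ by showing the growth condition forces $v\in\Sm^\Psi(G)$. Writing $h(x)=x\rho'(x)-\rho(x)$, one computes $h'(x)=x\rho''(x)$, so the hypothesis $\lim_{x\to\infty}x^2\rho''(x)=L<-d/l$ gives $h'(x)\le L'/x$ for large $x$ and any fixed $L'$ with $L<L'<-d/l$; integrating yields $h(x)\le L'\ln x+c$, so $v(s)\lesssim\tau(s)^{L'}$ for large $\tau(s)$. Using the near-origin behaviour $\Psi(y)\lesssim y^l$ (from the existence of $\lim_{y\to0^+}\Psi(y)/y^l$) together with the polynomial growth bound $\lambda(U^n)\le Cn^d$, I would estimate $\int_G\Psi(\alpha v)\,ds$ by organising the integral over the level sets of $\tau$. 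The decisive point is to sum $\sum_n\bigl(\lambda(U^n)-\lambda(U^{n-1})\bigr)\Psi(\alpha v)|_{\tau=n}$ by parts, pairing the cumulative estimate $\lambda(U^n)\le Cn^d$ against the increment of $\Psi(\alpha v)|_{\tau=n}\lesssim n^{lL'}$; this produces a series with general term of order $n^{d+lL'-1}$, which converges precisely when $L'<-d/l$.

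The main obstacle is this convergence estimate in $(iii)$. A naive shell-by-shell bound, assigning measure $Cn^d$ to each level set $\{\tau=n\}$, only yields the weaker threshold $L'<-(d+1)/l$, so one must use summation by parts (equivalently, integrate against the distribution function $x\mapsto\lambda(\{\tau\le x\})$) to exploit the cumulative polynomial bound and recover the sharp constant $-d/l$. Care is also needed in controlling the boundary terms in this summation and in upgrading the asymptotic bound on $h$ to a genuine pointwise estimate valid for all $s$, but these are routine once the leading convergence mechanism is in place.
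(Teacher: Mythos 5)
Your proposal is correct and follows the same skeleton as the paper's proof: the main assertion is obtained by combining Theorem \ref{T:subexpo weight decreasing quotient1} with \cite[Theorems 4.2 and 5.3]{OS2}; case (i) is a domination/module argument; case (ii) reduces to the pointwise bound $u\le v$; and case (iii) reduces to case (ii) by dominating $v$ by $(1+\tau)^{k}$ with $k<-d/l$. The differences are in execution, and both favour your version. In (ii), the paper proves $\rho(2x)-2\rho(x)\le x^2q'(x)$ via Cauchy's mean value theorem together with the claim that $(x^2q'(x))'=x\rho''(x)<0$, which tacitly invokes a second derivative that is not assumed to exist in case (ii); your inequality $\rho(2x)-\rho(x)=\int_x^{2x}\rho'(t)\,dt\le x\rho'(x)$ uses only the monotonicity of $\rho'$ coming from concavity, so it is more elementary and matches the stated hypotheses exactly. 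In (iii), the paper reaches the same polynomial bound on $v$ (via L'Hospital's rule applied to $x^2q'(x)/\ln(1+x)$ rather than your direct integration of $h'(x)\le L'/x$, an immaterial difference), but at the final step --- membership of $(1+\tau)^{k}$ in $\Sm^\Psi(G)$ for $k<-d/l$ --- the paper simply cites the proof of \cite[Corollary 5.3]{OS1}, whereas you prove it from scratch: your Abel summation over the level sets of $\tau$, pairing the cumulative bound $\lambda(U^n)\le Cn^d$ against the decreasing sequence $n^{lk}$, and your observation that a naive shell-by-shell estimate only yields the weaker threshold $-(d+1)/l$, is precisely the mechanism behind the cited result, so your argument is self-contained where the paper's is not. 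The details you defer as routine (e.g., $v\le 1$ everywhere since $x\rho'(x)\le\rho(x)$, so the asymptotic polynomial bound suffices; the boundary terms in the summation by parts) are indeed routine.
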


\begin{proof}
It follows from our hypothesis, \eqref{Eq:subexpo weight decreasing quotient-1}, and \cite[Theorems 4.2 and 5.3]{OS2} that $(L_{\om}^\Phi(G), \tc)$ is an Arens regular dual Banach algebra.
Hence it remains to prove that any one of the conditions (i)-(iii) implies that $u\in \Sm^\Psi(G)$.

If (i) holds, then clearly
$u\in \Sm^\Psi(G)$ since $\Sm^\Psi(G)$ is a $L^\infty(G)$-module under pointwise product.

Now suppose that (ii) holds. Then, for every $x>0$,
\begin{eqnarray*}
\rho(2x)-2\rho(x) &=& 2x[q(2x)-q(x)] \\
&=& \frac{q(2x)-q(x)}{(-2x)^{-1}-(-x)^{-1}} \\
&=& y^2q'(y),
\end{eqnarray*}
for some $y\in (x,2x)$. However, it is easy to see that $(x^2q'(x))'=x\rho''(x)<0$ so that
$x^2q'(x)$ is decreasing on $\R^+$. Therefore,
$$\rho(2x)-2\rho(x) \leq x^2q'(x) \ \ \ (x>0).$$
This, together with \eqref{Eq:subexpo weight decreasing quotient-2} and \eqref{Eq:subexpo weight decreasing quotient-diff}, imply that $0<u\leq v$ and so $u\in \Sm^\Psi(G)$ by \cite[Definition 3.4.2 and Proposition 3.4.3]{rao}.

Finally, suppose that (iii) holds. Then $\lim_{x\to\infty} x^2q'(x)=-\infty$. Indeed, since $(x^2q'(x))'=x\rho''(x)$, (iii) implies that there exists $x_0>0$ and $a>0$ such that $(x^2q'(x))'\le -\frac a x$ for every $x\ge x_0$, and the desired result follows by integration. Hence we can apply the L'Hospital's Rule to obtain
$$ \lim_{x\to \infty} \frac{x^2q'(x)}{\ln (1+x)}=\lim_{x\to \infty} \frac{x\rho''(x)}{(1+x)^{-1}}=\lim_{x\to \infty} x^2\rho''(x) <-d/l.$$
Therefore, there is $k<-d/l$ such that when $x$ is large enough,
$$e^{x^2q'(x)}\leq (1+x)^{k} .$$
So, by part (ii), it suffices to prove that the function
$$f(s)=(1+\tau(s))^{k} \ \ \ (s\in G)$$
belongs to $\Sm^\Psi(G)$. However, this is verified in the proof of \cite[Corollary 5.3]{OS1}. This completes our proof.
\end{proof}

\begin{rem}
(i) Since twisted Orlicz spaces generalize weighted $L^p$-spaces where one natural condition related to $L^p(G,\omega)$ becoming a Banach algebra is that $\omega^{-1}\in L^q(G)$ (see \cite[Theorem~3]{K1}), it would have been nice to have that $(L_{\om}^\Phi(G), \tc)$ becomes a Banach algebra provided $\omega^{-1}\in\Sm^\Psi(G)$. However, an example presented in Appendix A shows that the methods of Theorem~\ref{T:subexpo weight decreasing quotient} cannot guarantee such a result even under some natural additional assumptions on the function $\rho$.

(ii) we note that Theorem~\ref{T:subexpo weight decreasing quotient} provides a different proof of \cite[Corollaries 4.3 and 5.4]{OS2}. Also it gives rise to new classes of twisted Orlicz algebras.
\end{rem}

\section{Operator algebra}\label{S:Operator algebra}

\subsection{Operator spaces} We will briefly remind the reader about the basic properties of operator spaces. We refer the reader to \cite{ER} for further details concerning the notions presented below.

An {\it (abstract) operator space} is a vector space $V$ together with a family $\{\norm{\cdot}_n\}$ of Banach space norms on $M_n(V)$ stratifying certain relations (see \cite[p. 20]{ER}). Let $V$ and $W$ be operator spaces, and let $\theta: V \to W$ be a linear map. The completely bounded norm of $\theta$ is defined by
\[
\norm{ \theta }_{cb}=\sup_n \{\norm{ \theta_{n}} \}
\]
where $\theta _{n}:M_{n}(V)\rightarrow M_{n}(W)$ is given by
\[
\theta_{n}([v_{ij}])=[\theta(v_{ij})].
\]
We say that $\theta$ is completely bounded if $\norm{\theta}_{cb}<\infty ;$
is completely contractive if $\norm{\theta}_{cb}\leq 1$ and is a complete isometry if each $\theta_{n}$ is an isometry. It is a celebrated result of Ruan that every abstract operator space is completely isometric with a {\it concrete operator space}, i.e. a closed subspace of $B(H)$ for a Hilbert space $H$  \cite[Theorem 2.3.5]{ER} . In the latter case, the matrix norms are given by the canonical identification $M_n(B({H}))\cong B(H^n)$.

Given two operator spaces $V$ and $W$, we let $CB(V,W)$ denote the space of all completely
bounded maps from $V$ to $W$. Then $CB(V,W)$ becomes a Banach space with respect to the norm
$\norm{ \cdot }_{cb}$ and is in fact an operator space via the identification $M_n(CB(V,W))\cong
CB(V,M_n(W))$. This, in particular, induces a canonical operator space structure on $V^*$ (\cite[Section 3.2]{ER}).

It is well-known that every Banach space can be given an operator
space structure, though not necessarily in a unique way. The smallest and largest matrix norms that can be considered on a Banach space $V$ are called the {\it minimal} and {\it maximal} operator space structure; they are denoted by Min $V$ and Max $V$. It can be shown that for any operator space $W$ and bounded maps $\theta: Max\ V \to W$ and $\vartheta:W \to Min\ V$, both $\theta$ and $\vartheta$ are completely bounded with $\norm{\theta}_{cb}=\norm{\theta}$ and $\norm{\vartheta}_{cb}=\norm{\vartheta}$ (\cite[Section 3.3]{ER}).

Given two operator spaces $V$ and $W$, there are many ways to define operator space matrix norm on the
algebraic tensor product $V\otimes W$. In this paper, we consider two of them: the {\it operator space projective tensor product} $V\widehat{\otimes}\,W$ and the {\it Haagerup tensor product} $V\otimes^h W$.
(see \cite[Sections 7 and 8]{ER} for definition and more details). We highlight the following canonical complete isometry for all operators spaces $V$, $W$, $Z$
\begin{align}\label{Eq:Op proj tensor product-isomorphism}
CB(V\widehat{\otimes}\ W, Z)\cong CB(V, CB(W, Z)).
\end{align}
We also note that if $E,F$ are Banach spaces and $E \otimes^\gamma F$ is their Banach space projective tensor product, then we have the complete isometric identification (\cite[Eq. (8.2.6)]{ER}):
\begin{align}\label{Eq:Op proj tensor product-Max space}
Max\ F \widehat{\otimes}\ Max\ E \cong {Max}\ (E\otimes^\gamma F).
\end{align}


A Banach algebra $A$ that is also an operator space is called a
\textit{quantized Banach algebra} if the multiplication map
\[m:A\widehat{\otimes}A\rightarrow A,\; u\otimes v \mapsto uv\]
is completely bounded. Moreover, an operator space $X$ is called a
\textit{completely bounded $A$-bimodule}, if $X$ is a Banach $A$-bimodule and if the maps
\[A\widehat{\otimes} X\rightarrow X \ \ , \ \ u\otimes x \mapsto ux \]
and
\[X\widehat{\otimes} A\rightarrow X \ \ , \ \ x\otimes u \mapsto xu \]
are completely bounded. In general, if $X$ is a completely bounded $A$-bimodule, then its dual
space $X^*$ is a completely bounded $A$-bimodule via the actions
\[(u\cdot x^*)(x)=x^*(xu) \ \ \ , \ \ \ (x^*\cdot u)(x)=x^*(ux)\]
for every $u\in A$, $x\in X$, and $x^*\in X^*$. We note that for any Banach algebra $A$, Max\ $A$ is always a quantized Banach algebras and any Banach $A$-bimodule is a complectly bounded Max\ $A$-bimodule.

For a Hilbert space $H$, we let $H_c$ and $H_r$ denote the {\it column} and {\it row Hilbert operator spaces} on $H$, respectively (see \cite[Section 3.4]{ER}). If we let $\overline{H}$ to be the conjugate Hilbert space of $H$, then we have the following complete isometries \cite[Theorem 3.4.1 and Proposition 3.4.2]{ER}:
\begin{align}\label{Eq:column row-CB representations}
B(H)\cong CB(H_c) \ \ \text{and}\ \ B(\overline{H})\cong CB(H_r).
\end{align}
Here the first the mapping is the identity map whereas the second one is given by
\begin{align}\label{Eq:row-CB representations-formulation}
\la \overline{T}(\bar{h}) , \bar{k} \ra_{\overline{H}}=\la h, T(k) \ra_{H} \ \ (T\in CB(H_r)).
\end{align}
Finally, by \cite[Proposition 9.3.2]{ER}, for every operator space $V$, we have the following complete isometries:
\begin{align}\label{Eq:column row Hagg tensor product}
V\otimes^h H_c\cong V\widehat{\otimes} H_c \ \ , \ \ H_r\otimes^h V\cong H_r\widehat{\otimes} V.
\end{align}



\subsection{Twisted Orlicz algebras as operator algebras}

Let A be a quantized Banach algebra. We say that $A$ is {\it (completely) isomorphic to an operator algebra} if there is an operator algebra (i.e. closed subalgebra) $B\subseteq B(H)$ and a (completely) bounded linear isomorphism $\rho: A \to B$ such that $\rho^{-1}:B\to A$ is also (completely) bounded. There are interesting examples of (nontrivial) quantized Banach algebras isomorphic to operator algebra. For instance, for all $1\leq p \leq \infty$, the spaces Min $l^p$ $(1\leq p \leq \infty)$ with pointwise product and the Schatten Spaces Min $S_p$ endowed with the Schur product are operator algebras (\cite[Corollary 5.4.11]{BL}), \cite{BBLV}, \cite{Dav}, \cite{V}).

In \cite{V}, Varopoulos showed that certain weighted group algebras on integers are isomorphic to operator algebras. His results were generalized in \cite{LSS} where it was shown that $l^1(G,\om)$ is isomorphic to an operator algebra when $G$ is a finitely generated group of polynomial growth and $\om$ is either the polynomial weight \eqref{Eq:poly weight-defn} with $1/\om \in l^2(G)$ or the subexponential weight \eqref{Eq:Expo weight-defn} (see \cite[Theorem 3.1 and Theorem 3.5]{LSS}). We point out that since every operator algebra is Arens regular and isomorphism preserve Arens regularity, one cannot extend the results of \cite{LSS} to a non-discrete case \cite{CY}.

In this section, we show that a subclass of twisted Orlicz algebras which were shown to be Arens regular in \cite{OS2} are in fact completely isomorphic to operator algebras. Interestingly, in contrast to the weighted group algebras, it is not necessary for the underlying group to be discrete to obtain our result. We first need the following two technical lemma.

\begin{lem}\label{L:column row multi-L1 space}
Let $G$ be a locally compact group, and let $u\in L^2(G)$. Then the mappings $R_c:L_c^2(G)\to L^1(G)$ and $R_r:L_r^2(G)\to L^1(G)$ defined by
\begin{align}\label{Eq:cb mult-row or Column space}
 R_c(f)=R_r(f)=fu \ \ \ \ \ (f\in L^2(G))
 \end{align}
are completely bounded. Here $L_c^2(G)$ and $L_r^2(G)$ are column and row Hilbert operator space on $L^2(G)$, respectively, and the operator space structure on $L^1(G)$ is the maximal operator space.
\end{lem}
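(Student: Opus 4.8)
The plan is to estimate the matrix amplifications $(R_c)_n\colon M_n(L_c^2(G))\to M_n(\mathrm{Max}\,L^1(G))$ directly and show $\norm{(R_c)_n}\le\norm{u}_2$ for every $n$, which gives $\norm{R_c}_{cb}\le\norm{u}_2<\infty$. The essential tool is the defining property of the maximal operator space: the norm on $M_n(\mathrm{Max}\,L^1(G))$ is the supremum of $\norm{[\phi(x_{ij})]}_{M_n(B(H))}$ over all Hilbert spaces $H$ and all contractions $\phi\colon L^1(G)\to B(H)$. Since every bounded operator from $L^1(G)$ into the dual space $B(H)$ is represented by a weak$^*$-measurable field, I would write each such contraction as $\phi(f)=\int_G f(x)T(x)\,dx$ for a measurable $T\colon G\to B(H)$ with $\norm{T(x)}\le1$ a.e. Thus, writing $F(x)=[f_{ij}(x)]$, it suffices to bound the norm of the operator $A=[\phi(f_{ij}u)]=\int_G u(x)\,(F(x)\otimes T(x))\,dx$ on $H^n=\C^n\otimes H$ by $\norm{u}_2\,\norm{[f_{ij}]}_{M_n(L_c^2(G))}$, uniformly in $n$, $\phi$, and $T$.

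To do this I would test $A$ against unit vectors $\xi=(\xi_j)_j,\ \eta=(\eta_i)_i\in H^n$ and expand
\[
\langle A\xi,\eta\rangle=\int_G u(x)\sum_{i,j}f_{ij}(x)\,\langle T(x)\xi_j,\eta_i\rangle\,dx .
\]
Setting $w_i(x)=\sum_j f_{ij}(x)\xi_j\in H$ and using $\norm{T(x)}\le1$ together with the Cauchy--Schwarz inequality in the index $i$, one gets
\[
\abs{\langle A\xi,\eta\rangle}\le\int_G\abs{u(x)}\Big(\sum_i\norm{w_i(x)}^2\Big)^{1/2}dx\le\norm{u}_2\Big(\int_G\sum_i\norm{w_i(x)}^2\,dx\Big)^{1/2}.
\]
The remaining integral is a quadratic form in $\xi$: a direct computation identifies it with $\langle(C\otimes I_H)\xi,\xi\rangle$, where $C=\int_G F(x)^*F(x)\,dx$ is positive and satisfies $\norm{C}=\norm{[f_{ij}]}_{M_n(L_c^2(G))}^2$ (this last equality is exactly the formula for the column-space matrix norm). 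Hence the integral is at most $\norm{C}\,\norm{\xi}^2\le\norm{[f_{ij}]}_{M_n(L_c^2(G))}^2$, and taking the supremum over $\xi,\eta$ yields $\norm{A}\le\norm{u}_2\,\norm{[f_{ij}]}_{M_n(L_c^2(G))}$, as required. The row case is entirely symmetric: the column-norm identity is replaced by $\norm{[f_{ij}]}_{M_n(L_r^2(G))}^2=\norm{\int_G F(x)F(x)^*\,dx}$ and one performs the Cauchy--Schwarz step in the index $j$ (equivalently, one passes to conjugates and uses $B(\overline{H})\cong CB(H_r)$ from \eqref{Eq:column row-CB representations}); this gives $\norm{R_r}_{cb}\le\norm{u}_2$ as well.

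I expect the main obstacle to be obtaining a bound independent of $n$. The crude estimates fail here: bounding $\norm{T(x)}\le1$ and integrating $\norm{F(x)}$, or passing to Hilbert--Schmidt/trace norms, all introduce a factor growing like $\sqrt{n}$ (they produce $\mathrm{tr}(C)\le n\norm{C}$ in place of $\norm{C}$) and only prove boundedness of each $(R_c)_n$, not complete boundedness. The point of the argument above is to arrange the Cauchy--Schwarz so that it is the operator norm of the Gram matrix $C=\int_G F^*F$ --- which is precisely the column-space norm of $[f_{ij}]$ --- that appears, rather than its trace. The only other point requiring care is the representation of an arbitrary contraction $L^1(G)\to B(H)$ as integration against a contractive $B(H)$-valued field, which I would justify via the standard duality $B(L^1(G),B(H))\cong L^\infty_{w^*}(G,B(H))$ for the (localizable) Haar measure.
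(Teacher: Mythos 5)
Your argument is correct, but it takes a genuinely different route from the paper's. The paper proceeds softly and by duality: the $*$-homomorphism $L^\infty(G)\to B(L^2(G))$ is a complete isometry, the identifications $B(L^2(G))\cong CB(L^2_c(G))$ and $B(\overline{L^2(G)})\cong CB(L^2_r(G))$ make $L^2_c(G)$ and $L^2_r(G)$ completely bounded $L^\infty(G)$-modules, hence so are their duals; the module maps $\xi\mapsto \xi\cdot F$ with $F(f)=\int_G f u\,ds$ are then identified with $R_c^*$ and $R_r^*$, and complete boundedness of $R_c,R_r$ follows from \cite[Proposition 3.2.2]{ER}. You instead verify the matrix-norm estimates directly: you unpack the $M_n(\mathrm{Max}\,L^1(G))$ norm as a supremum over contractions $\phi\colon L^1(G)\to B(H)$, represent each such $\phi$ as integration against a contractive weak$^*$-measurable field, and arrange the Cauchy--Schwarz step so that what appears is the operator norm of the Gram matrix $\int_G F^*F$ (resp.\ $\int_G FF^*$), which is precisely the $M_n(L^2_c(G))$ (resp.\ $M_n(L^2_r(G))$) norm; I checked the column computation line by line, and the row case is symmetric as you say. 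Your approach buys two things: an explicit quantitative bound $\norm{R_c}_{cb}\le \norm{u}_2$, and independence from a point the paper leaves implicit --- namely that in the duality step one must know that $(\mathrm{Max}\,L^1(G))^*=\mathrm{Min}\,L^\infty(G)$ carries the \emph{same} operator space structure as the von Neumann algebra $L^\infty(G)$ (true, since commutative C$^*$-algebras are minimal operator spaces, but unstated). The cost is the measure-theoretic input $B(L^1(G),B(H))\cong L^\infty_{w^*}(G,B(H))$, which for non-$\sigma$-finite Haar measure requires decomposability; you flag this correctly, and it can also be sidestepped entirely by restricting $\phi$ to $L^1(S)$ for a $\sigma$-finite set $S$ carrying the finitely many functions $f_{ij}u$ under consideration, since only the values $\phi(f_{ij}u)$ enter the estimate.
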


\begin{proof}
We know that the mappings
$$L: L^\infty(G)\to B(L^2(G)) \ , \ L(\xi)(f)=f\xi $$
and $$\overline{L}: L^\infty(G)\to B(\overline{L^2(G)}) \ , \ L(\xi)(\bar{f})=\bar{f}\xi $$
are $*$-isomorphisms between von Neumann algebras so that they are complete isometries. By applying the identifications \eqref{Eq:Op proj tensor product-isomorphism}, \eqref{Eq:column row-CB representations}, and \eqref{Eq:row-CB representations-formulation}, it follows routinely that the mappings
$$m_c: L^\infty(G)\widehat{\otimes} L^2_c(G) \to L_c^2(G) \ , \ \xi\otimes f\mapsto f\xi $$
and $$m_r: L^\infty(G)\widehat{\otimes} L^2_r(G) \to L_r^2(G) \ , \ \xi\otimes f\mapsto f\xi$$
are completely bounded. In other worlds, $L^2_c(G)$ and $L^2_r(G)$ are operator $L^\infty(G)$-bimodule and so are their dual $L^2_c(G)^*$ and $L^2_r(G)^*$. In particular, for every $F\in L^2(G)^*$, the mappings
$$L^\infty(G)\to L_c^2(G)^* \ , \ \xi \mapsto \xi \cdot F $$
and $$L^\infty(G) \to L_r^2(G)^* \ , \ \xi \mapsto \xi \cdot F$$
are completely bounded, where $(\xi\cdot F)(f)=F(\xi f)$ ($f\in L^2(G)$).  However, a straightforward computation together with Riesz representation theorem shows that the preceding maps are exactly $R_c^*$ and $R_r^*$, respectively, where $R_c$ and $R_r$ are the mappings defined in \eqref{Eq:cb mult-row or Column space} and $F\in L^2(G)^*$ is defined by $F(f)=\int_G f(s)u(s)ds$ ($f\in L^2(G)$). Thus, by \cite[Proposition 3.2.2]{ER}, $R_c$ and $R_r$ are completely bounded.
\end{proof}

\begin{lem}\label{L:covolution style maps-Orlicz spaces}
Let $G$ be a locally compact group, and $L\in L^\infty(G\times G)$. Then the operators
\begin{align}\label{Eq:covolution style maps-Orlicz spaces-I}
m_{\Phi,1}:L^\Phi(G)\widehat{\otimes} L^1(G) \to L^\Phi(G)
\end{align}
and
\begin{align}\label{Eq:covolution style maps-Orlicz spaces-II}
m_{1,\Phi}:L^1(G)\widehat{\otimes} L^\Phi(G) \to L^\Phi(G)
\end{align}
given by $(f\in L^\Phi(G)$ and $g\in L^1(G))$
\begin{align*}
m_{\Phi,1}(f\otimes g)(t)=\int_G f(s)g(s^{-1}t)L(s, s^{-1}t) dt \ \ (s\in G),
\end{align*}
and
\begin{align*}
m_{1,\Phi}(g\otimes f)(t)=\int_G f(s)g(s^{-1}t)L(s, s^{-1}t) dt \ \ (s\in G),
\end{align*}
are well-defined and completely bounded. Here the operator space structures on $L^\Phi(G)$ and $L^1(G)$ are  the maximal operator space.
\end{lem}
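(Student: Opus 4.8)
The plan is to obtain ordinary boundedness by a domination argument and then upgrade to complete boundedness essentially for free from the universal property of the maximal operator space. First I would verify that the bilinear maps underlying $m_{\Phi,1}$ and $m_{1,\Phi}$ are bounded. For $f\in L^\Phi(G)$ and $g\in L^1(G)$ the pointwise estimate
$$|m_{\Phi,1}(f\otimes g)(t)|\leq \|L\|_\infty\int_G|f(s)|\,|g(s^{-1}t)|\,ds=\|L\|_\infty\,(|f|*|g|)(t)$$
holds, where $*$ denotes ordinary convolution. Taking the trivial cocycle $\Om\equiv 1$ in Theorem~\ref{T:twisted Orlicz alg}(i) shows that $L^\Phi(G)$ is a Banach $L^1(G)$-bimodule under $*$, so by monotonicity of the Orlicz norm we obtain $\|m_{\Phi,1}(f\otimes g)\|_\Phi\leq\|L\|_\infty\|f\|_\Phi\|g\|_1$, and the same bound handles $m_{1,\Phi}$. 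In particular both maps are well-defined on the algebraic tensor product, and by the universal property of the Banach-space projective tensor product they extend to bounded linear maps on $L^\Phi(G)\otimes^\gamma L^1(G)$ and $L^1(G)\otimes^\gamma L^\Phi(G)$ respectively.

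Next I would move to the operator-space level. Since every space in sight carries the maximal operator space structure, the complete isometry \eqref{Eq:Op proj tensor product-Max space} yields
$$L^\Phi(G)\widehat{\otimes}L^1(G)\cong Max\,\bigl(L^1(G)\otimes^\gamma L^\Phi(G)\bigr),\qquad L^1(G)\widehat{\otimes}L^\Phi(G)\cong Max\,\bigl(L^\Phi(G)\otimes^\gamma L^1(G)\bigr).$$
Under these identifications each of $m_{\Phi,1}$ and $m_{1,\Phi}$ is exactly the bounded linear map constructed in the first step, now regarded as a map out of a maximal operator space into $L^\Phi(G)=Max\,L^\Phi(G)$. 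Invoking the defining property of $Max$ recalled in Section~\ref{S:Operator algebra}, namely that any bounded linear map $\theta\colon Max\,V\to W$ into an arbitrary operator space $W$ is automatically completely bounded with $\|\theta\|_{cb}=\|\theta\|$, I conclude that $m_{\Phi,1}$ and $m_{1,\Phi}$ are completely bounded.

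I do not anticipate a genuine obstacle here: the argument is formal once the two universal properties — of $\otimes^\gamma$ and of $Max$ — are combined. The only points that need care are the domination estimate securing well-definedness and ordinary boundedness, and the bookkeeping of the order of the tensor factors in \eqref{Eq:Op proj tensor product-Max space}; the latter is harmless because the Banach-space projective tensor product is symmetric, so it is immaterial whether the convolution variable occupies the first or the second slot.
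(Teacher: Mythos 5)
Your proposal is correct and follows essentially the same route as the paper: the pointwise domination $|m_{\Phi,1}(f\otimes g)|\leq \|L\|_\infty\,|f|*|g|$, the $L^1(G)$-bimodule property from Theorem~\ref{T:twisted Orlicz alg}(i), and the identification \eqref{Eq:Op proj tensor product-Max space} reducing complete boundedness to ordinary boundedness on the projective tensor product. Your explicit remarks on the trivial cocycle and on the symmetry of $\otimes^\gamma$ merely spell out steps the paper leaves implicit.
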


\begin{proof}
We will show that $m_{\Phi,1}$ is well-defined and completely bounded. The other case can be proven similarly.

Without loss of generality, we can assume that $\|L\|_\infty\leq 1$. Since $L^\Phi(G)$ and $L^1(G)$ are maximal operator spaces, by \eqref{Eq:Op proj tensor product-Max space}, it suffices to show that $m_{\Phi,1}$ is defined and bounded on the Banach space projective tensor product $L^\Phi(G){\otimes^\gamma} L^1(G)$. To this end, for every $f\in L^\Phi(G)$ and $g\in L^1(G)$, a routine calculation shows that
$$|m_{\Phi,1}(f\otimes g)|\leq |f|*|g|.$$
Therefore, by \eqref{Eq:Orlicz Luxemburg defn}, Theorem \ref{T:twisted Orlicz alg}(i) and the fact that $\Phi$ is increasing, we have that
$$N_\Phi(m_{\Phi,1}(f\otimes g))|\leq N_\Phi(|f|*|g|)\leq C N_\Phi(f)\|g\|_1,$$
where $C>0$ is a constant independent of $f$ and $g$. This complete the proof.
\end{proof}


The following theorem is the main result of this section whose proof relies on two crucial facts: a decomposition of the twisted convolution on Orlicz spaces similar to the one presented in
\cite[Theorem 5.2]{OS2} as well as the modern approaches of characterization of abstract operator algebras using the theory of operator spaces. In the latter case, we use the well-known result of D. Blecher that a quantized Banach algebra $A$ is completely isomorphic to an operator algebra if and only if the multiplication mapping from $A\otimes A$ into $A$ extends to a completely bounded map from $A\otimes^h A$ into $A$ (\cite[Theorem 5.2.1]{BL}).


\begin{thm}\label{T:twisted Orlicz alg-operator alg}
Let $G$ be a locally compact group, and let $\Om \in \Zb$. Suppose that $L^\Phi(G)\subseteq L^2(G)$ and there exit non-negative measurable functions $u,v \in L^2(G)$ satisfying \eqref{Eq:2-cocycle bdd sum}. Then $(L^\Phi(G),\tw)$, with the maximal operator space structure, is a twisted Orlicz algebra which is completely isomorphic to an operator algebra.
\end{thm}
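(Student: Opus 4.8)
The plan is to apply Blecher's characterization \cite[Theorem 5.2.1]{BL}. Setting $A=\mathrm{Max}\,L^\Phi(G)$, once we know $(L^\Phi(G),\tw)$ is a Banach algebra the space $A$ is automatically a quantized Banach algebra, and it remains to extend the twisted convolution to a completely bounded map $A\otimes^h A\to A$ on the Haagerup tensor product. The driving idea, as in \cite[Theorem 5.2]{OS2}, is to split $\tw$ according to \eqref{Eq:2-cocycle bdd sum}. Define $L_1(s,r)=\Om(s,r)/u(s)$ where $u(s)>0$ (and $0$ otherwise) and $L_2(s,r)=\Om(s,r)/v(r)$ where $v(r)>0$; since $|\Om(s,r)|\le u(s)+v(r)$, the cocycle splits as $\Om(s,r)=u(s)L_1(s,r)+v(r)L_2(s,r)$ with $L_1,L_2\in L^\infty(G\times G)$ of norm at most $1$. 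Correspondingly $f\tw g=T_1(f,g)+T_2(f,g)$, where $T_1(f,g)(t)=\int_G (fu)(s)\,g(s^{-1}t)\,L_1(s,s^{-1}t)\,ds$ and $T_2(f,g)(t)=\int_G f(s)\,(gv)(s^{-1}t)\,L_2(s,s^{-1}t)\,ds$.

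First I would dispatch the Banach-algebra property. The inclusion $L^\Phi(G)\subseteq L^2(G)$ is continuous by the closed graph theorem, so there is $c>0$ with $\|f\|_2\le c\|f\|_\Phi$; together with $u,v\in L^2(G)$ this yields $fu,gv\in L^1(G)$ with $\|fu\|_1\le c\|u\|_2\|f\|_\Phi$ and $\|gv\|_1\le c\|v\|_2\|g\|_\Phi$. Substituting these into the bounded maps $m_{1,\Phi}$ and $m_{\Phi,1}$ of Lemma \ref{L:covolution style maps-Orlicz spaces} (with kernels $L_1$ and $L_2$ respectively) shows $T_1,T_2\colon L^\Phi(G)\times L^\Phi(G)\to L^\Phi(G)$ are bounded, that the defining integral for $f\tw g$ converges almost everywhere, and that $\|f\tw g\|_\Phi\le C\|f\|_\Phi\|g\|_\Phi$. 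Thus $(L^\Phi(G),\tw)$ is a twisted Orlicz algebra and $A$ is a quantized Banach algebra.

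For the completely bounded extension I would treat $T_1$ and $T_2$ separately, using that every bounded map out of a maximal operator space is completely bounded; in particular the inclusions $\iota_c\colon A\to L^2_c(G)$ and $\iota_r\colon A\to L^2_r(G)$ are completely bounded. For $T_2$, convert the right-hand factor to a column space: the map $\mathrm{id}\otimes\iota_c\colon A\otimes^h A\to A\otimes^h L^2_c(G)$ is completely bounded by functoriality of $\otimes^h$, the identification $A\otimes^h L^2_c(G)\cong A\widehat\otimes L^2_c(G)$ from \eqref{Eq:column row Hagg tensor product} is a complete isometry, and I then compose with $\mathrm{id}\otimes R_c\colon A\widehat\otimes L^2_c(G)\to A\widehat\otimes\mathrm{Max}\,L^1(G)$ (where $R_c(g)=gv$ is completely bounded by Lemma \ref{L:column row multi-L1 space}) followed by $m_{\Phi,1}\colon A\widehat\otimes\mathrm{Max}\,L^1(G)\to A$. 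The resulting completely bounded map realizes $T_2$ on $A\otimes^h A$. Symmetrically, for $T_1$ I convert the left-hand factor to a row space via $L^2_r(G)\otimes^h A\cong L^2_r(G)\widehat\otimes A$, apply $R_r\otimes\mathrm{id}$ with $R_r(f)=fu$, and finish with $m_{1,\Phi}$. Adding the two pieces produces a completely bounded extension of the multiplication to $A\otimes^h A$, so Blecher's theorem applies.

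The main obstacle is the tensor-product bookkeeping, and it explains why both the column and row versions in Lemma \ref{L:column row multi-L1 space} and both maps in Lemma \ref{L:covolution style maps-Orlicz spaces} are needed: the identifications \eqref{Eq:column row Hagg tensor product} require a column space in the right slot and a row space in the left slot, forcing $T_2$ to be routed through $L^2_c(G)$ and $T_1$ through $L^2_r(G)$. Beyond this one must verify that $\otimes^h$ and $\widehat\otimes$ are functorial for the completely bounded maps being inserted and that the composites literally compute $T_1$ and $T_2$ (tracking the kernels $L_1,L_2$ and the arguments $s,s^{-1}t$). The only genuinely analytic inputs are the H\"older estimate placing $fu,gv$ in $L^1(G)$ and the continuity of $L^\Phi(G)\hookrightarrow L^2(G)$.
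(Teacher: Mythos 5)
Your overall strategy is exactly the paper's own: split the multiplication according to \eqref{Eq:2-cocycle bdd sum}, route the $u$-piece through $L^2_r(G)$ and the $v$-piece through $L^2_c(G)$ via \eqref{Eq:column row Hagg tensor product}, Lemma \ref{L:column row multi-L1 space} and Lemma \ref{L:covolution style maps-Orlicz spaces}, and invoke Blecher's theorem \cite[Theorem 5.2.1]{BL}. However, there is a genuine error in your splitting kernels, and it matters because every subsequent step uses $\|L_i\|_\infty\le 1$. With your definitions $L_1(s,r)=\Om(s,r)/u(s)$ and $L_2(s,r)=\Om(s,r)/v(r)$ one gets
\[
u(s)L_1(s,r)+v(r)L_2(s,r)=2\,\Om(s,r)
\]
on the set where both $u(s)>0$ and $v(r)>0$, not $\Om(s,r)$, so the identity $f\tw g=T_1(f,g)+T_2(f,g)$ fails. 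Worse, neither kernel is essentially bounded: \eqref{Eq:2-cocycle bdd sum} only yields $|L_1(s,r)|\le 1+v(r)/u(s)$, which blows up when $u(s)$ is small and $v(r)$ is large, so Lemma \ref{L:covolution style maps-Orlicz spaces} cannot be applied to $L_1,L_2$ and the boundedness of $T_1,T_2$ (hence also the Banach-algebra claim) does not follow as written.

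The repair is one line, and it is what the paper does: since $\Om$ takes values in $\Cm$, \eqref{Eq:2-cocycle bdd sum} forces $u(s)+v(t)\ge|\Om(s,t)|>0$ everywhere, so $L(s,t)=\Om(s,t)/\bigl(u(s)+v(t)\bigr)$ is a single kernel with $\|L\|_\infty\le 1$ and $\Om(s,t)=u(s)L(s,t)+v(t)L(s,t)$; taking $L_1=L_2=L$ makes your decomposition and all the ensuing estimates correct. With this fix your argument goes through essentially verbatim and coincides with the paper's proof; the only real difference is that you obtain the Banach-algebra property directly (Cauchy--Schwarz puts $fu,\,gv$ in $L^1(G)$, then Lemma \ref{L:covolution style maps-Orlicz spaces} bounds $T_1,T_2$), whereas the paper cites $L^2(G)\subseteq \Sm^\Psi(G)$ together with Theorem \ref{T:twisted Orlicz alg}(ii); both are fine, and yours is slightly more self-contained. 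Incidentally, your naming of the row-space multiplication operator as $R_r$ in the $T_1$ chain is the intended one (the paper's displayed chain writes $R_c$ there, which is a typo).
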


\begin{proof}
We first note that since $L^\Phi(G)\subseteq L^2(G)$, a standard application of the closed graph theorem shows that the inclusion is bounded so that there is $C>0$ such that
$$\|f\|_2\leq C N_\Phi(f) \ \ \ (f\in L^\Phi(G)).$$
Moreover, by \cite[Lemma 6.2.1 and Corollary 6.2.2]{rao}, $L^2(G)\subseteq \Sm^\Psi(G)$.
In particular, by our hypothesis and Theorem \ref{T:twisted Orlicz alg}, $(L^\Phi(G),\tw)$ is a twisted Orlicz algebra. Now suppose that
$$\Gamma: L^\Phi(G) \widehat{\otimes} L^\Phi(G) \to L^\Phi(G) \ \ , \ \ \Gamma(f\otimes g)=f\tw g,$$
is the multiplication map on $L^\Phi(G)$. By our hypothesis and \eqref{Eq:2-cocycle bdd sum}, we can take an element $L\in L^\infty(G\times G)$ with $\|L\|_\infty\leq 1$ such that
$$\Om(s,t)=L(s,t)\Big(u(s)+v(t)\Big) \ \ \ (s,t\in G).$$
Hence for every $f,g\in L^\Phi(G)$, we have
\begin{eqnarray*}
(f \tw g)(t)&=& \int_G f(s)g(s^{-1}t)\Om(s,s^{-1}t) ds \\
&=& \int_G f(s)u(s) g(s^{-1}t)L(s,t) ds+\int_G f(s)g(s^{-1}t)v(s^{-1}t)L(s,t) ds.
\end{eqnarray*}
Thus we can write
\begin{eqnarray*}
\Gamma=\Gamma_1+\Gamma_2,
\end{eqnarray*}
where
\begin{eqnarray*}
\Gamma_1(f\tw g)(s):=\int_G f(s)u(s) g(s^{-1}t)L(s,s^{-1}t) ds
\end{eqnarray*}
and
\begin{eqnarray*}
\Gamma_2(f\tw g)(s):=\int_G f(s)g(s^{-1}t)v(s^{-1}t)L(s,s^{-1}t) ds.
\end{eqnarray*}
Therefore, to show that $L^\Phi(G)$ is completely isomorphic to an operator algebra, by Blecher's result (\cite[Theorem 5.2.1]{BL}), it suffices to show that $\Gamma_i$, $i=1,2$ extend to completely bounded maps from $L^\Phi(G)\otimes^h L^\Phi(G)$ into $L^\Phi(G)$. However this holds since we have the following chain of completely bounded maps that extend $\Gamma_1$ and $\Gamma_2$, respectively:
\begin{eqnarray*}
\Gamma_1: L^\Phi(G)\otimes^h L^\Phi(G) &\stackrel{\iota_{\Phi,r}\otimes id}{\longrightarrow}& L^2_r(G)\otimes^h L^\Phi(G)\\ &\cong& L^2_r(G)\widehat{\otimes} L^\Phi(G) \stackrel{R_c\otimes id}{\longrightarrow} L^1(G) \widehat{\otimes} L^\Phi(G) \stackrel{m_{1,\Phi}}{\longrightarrow}  L^\Phi(G)
\end{eqnarray*}
and
\begin{eqnarray*}
\Gamma_2: L^\Phi(G)\otimes^h L^\Phi(G) &\stackrel{id\otimes \iota_{\Phi,c}}{\longrightarrow}& L^\Phi(G)\otimes^h L^2_c(G) \\ &\cong& L^\Phi(G)\widehat{\otimes} L^2_c(G) \stackrel{id\otimes R_c}{\longrightarrow} L^\Phi(G)\widehat{\otimes} L^1(G) \stackrel{m_{\Phi,1}}{\longrightarrow}  L^\Phi(G).
\end{eqnarray*}
Here the mappings $R_c$, $R_r$, $m_{\Phi,1}$, and $m_{1,\Phi}$ are defined in \eqref{Eq:cb mult-row or Column space}, \eqref{Eq:covolution style maps-Orlicz spaces-I}, and \eqref{Eq:covolution style maps-Orlicz spaces-II} which are shown to be completely bounded in Lemma \ref{L:column row multi-L1 space} and Lemma \ref{L:covolution style maps-Orlicz spaces}. Also, we are using \eqref{Eq:column row Hagg tensor product} and the facts that formal identities $\iota_{\Phi,r}: L^\Phi(G)\to L_r^2(G)$ and $\iota_{\Phi,c}: L^\Phi(G)\to L_c^2(G)$ are completely bounded since they are bounded and $L^\Phi(G)$ and $L^1(G)$ are maximal operator spaces.
\end{proof}



In the rest of this section, we will show how one can obtain a wide range of twisted Orlicz algebras satisfying the assumption of Theorem \ref{T:twisted Orlicz alg-operator alg} so that they are isomorphic to operator algebras. We start by looking at how one could embed an Orlicz space into the space of $L^2$-integrable functions, a necessary condition in Theorem \ref{T:twisted Orlicz alg-operator alg}. Luckily, using the results of \cite{rao}, we can formulate a straightforward criterion to verify the existence of such embedding.

\begin{prop}\label{P:Orlicz space subspace L2}
Let $G$ be a locally compact group. Then $L^\Phi(G)\subseteq L^2(G)$ if either one of the following conditions holds:\\
$(i)$ $G$ is compact and there is $K>0$ and $x_0\geq 0$ such that
\begin{equation}\label{Eq:Orlicz space subspace L2-compact}
Kx^2 \leq \Phi(x) \ \ \text{for all}\ \ (x\geq x_0 \geq 0);
\end{equation}
$(ii)$ $G$ is discrete and there is $K>0$ and $x_0> 0$ such that
\begin{align}\label{Eq:Orlicz space subspace L2-discrete}
Kx^2 \leq \Phi(x) \ \ \text{for all}\ \ (x_0\geq x \geq 0);
\end{align}
$(iii)$ $G$ is noncompact and there is $K>0$ such that
\begin{align}\label{Eq:Orlicz space subspace L2-non compact}
Kx^2 \leq \Phi(x) \ \ \text{for all}\ \ (x \geq 0).
\end{align}
\end{prop}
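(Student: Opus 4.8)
The plan is to exploit the defining property of $L^\Phi(G)$ --- namely that $f\in L^\Phi(G)$ precisely when $\int_G \Phi(\alpha|f(s)|)\,ds<\infty$ for some $\alpha>0$ --- and to convert the pointwise lower bound $Kx^2\le\Phi(x)$ into an $L^2$-integrability estimate. The three cases correspond to the three regimes of the Haar measure $\lambda$, and hence to the range of $x$ on which the comparison $Kx^2\le\Phi(x)$ is actually needed; this trichotomy is exactly what underlies the classical Orlicz inclusion theorems of \cite{rao}, which I would either cite or, more transparently, reprove by hand in this special case since the direct argument is short.

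Fix $f\in L^\Phi(G)$ and choose $\alpha>0$ with $\int_G\Phi(\alpha|f(s)|)\,ds<\infty$. Case (iii) is then immediate: since $Kx^2\le\Phi(x)$ holds for every $x\ge0$, substituting $x=\alpha|f(s)|$ pointwise gives
\[
K\alpha^2\int_G|f(s)|^2\,ds\le\int_G\Phi(\alpha|f(s)|)\,ds<\infty ,
\]
so $f\in L^2(G)$. This uses nothing about $G$ beyond the global inequality, and it is the model computation for the remaining two cases, where the inequality is only available on part of the range.

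For case (i) I would split $G$ according to the size of $|f|$. On $A=\{s:\alpha|f(s)|\ge x_0\}$ the hypothesis gives $K\alpha^2|f|^2\le\Phi(\alpha|f|)$, so $\int_A|f|^2\le (K\alpha^2)^{-1}\int_A\Phi(\alpha|f|)<\infty$ exactly as above; on the complement $\alpha|f|<x_0$, so $|f|^2<x_0^2/\alpha^2$ is bounded, and since $G$ is compact its Haar measure is finite, whence $\int_{G\setminus A}|f|^2\le (x_0/\alpha)^2\lambda(G)<\infty$. Adding the two pieces gives $f\in L^2(G)$.

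Case (ii) requires the most care and is where I expect the only real subtlety. Here $\lambda$ is a multiple of counting measure, so $\int_G\Phi(\alpha|f|)\,ds$ is a convergent sum over the points of $G$; convergence of $\sum_s\Phi(\alpha|f(s)|)$ forces the general term to tend to $0$, and since $\Phi$ is continuous, strictly increasing, and positive on $(0,\infty)$, this forces $\alpha|f(s)|\to0$. Consequently $F=\{s:\alpha|f(s)|>x_0\}$ is finite. On the cofinite set $G\setminus F$ we have $\alpha|f|\le x_0$, so the small-$x$ inequality applies and yields $\sum_{s\notin F}|f(s)|^2\le(K\alpha^2)^{-1}\sum_{s\notin F}\Phi(\alpha|f(s)|)<\infty$, while the sum over the finite set $F$ is trivially finite; hence $f\in L^2(G)$. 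Thus the main obstacle is not any estimate but the structural observation that on a discrete group only the behaviour of $\Phi$ near $0$ is relevant, because summability forces decay of $f$ --- which is precisely why the hypothesis in (ii) is imposed on $[0,x_0]$ rather than near infinity as in (i).
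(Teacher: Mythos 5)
Your proof is correct, and it differs from the paper's mainly in being self-contained. For parts (i) and (iii) the paper simply invokes the general Orlicz-space inclusion theorem \cite[Theorem 5.1.3]{rao}, whereas you reprove these cases by hand: the global substitution $x=\alpha|f(s)|$ for (iii), and for (i) the splitting of $G$ into $\{\alpha|f|\ge x_0\}$ (where the hypothesis applies) and its complement (where $|f|$ is bounded and $\lambda(G)<\infty$). This buys transparency and independence from the reference at essentially no cost in length, while the paper's citation buys brevity and implicitly the sharper ``if and only if'' content of the classical theorem. For part (ii) your argument is essentially the paper's: summability of $\sum_s\Phi(\alpha|f(s)|)$ forces $\alpha|f(s)|\to 0$, so only the behaviour of $\Phi$ near $0$ matters. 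In fact you are slightly more careful than the paper here: the paper asserts a single constant $K$ with $K\alpha^2|f(s)|^2\le\Phi(\alpha|f(s)|)$ for \emph{all} $s\in G$, which as written needs a small adjustment (shrinking $K$ to accommodate the finitely many points where $\alpha|f(s)|>x_0$), whereas you isolate that finite exceptional set $F$ explicitly and sum over it separately, which is the cleaner way to phrase the same estimate.
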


\begin{proof}
Parts (i) and (iii) follow from \cite[Theorem 5.1.3]{rao}. To prove (ii),
let $f\in l^\Phi(G)$. By \eqref{Eq:Orlicz defn},
there is $\alpha>0$ such that
$\sum_{s\in G} \Phi(\alpha |f(s)|)<\infty$ implying that $\lim_{s\to \infty} f(s)=0$ as $\Phi$ is continuous and strictly increasing. Thus, by \eqref{Eq:Orlicz space subspace L2-discrete}, there is $K>0$ such that
$$K\alpha^2|f(s)|^2\leq \Phi(\alpha |f(s)|)\ \ \ \ (s\in G).$$
Therefore,
$$K\alpha^2\sum_{s\in G} |f(s)|^2 \leq \sum_{s\in G} \Phi(\alpha |f(s)|)<\infty,$$
i.e. $f\in l^2(G)$.
\end{proof}

\begin{rem}\label{R:Young functions equivalent with parabola}
We can construct a large family of Young functions satisfying \eqref{Eq:Orlicz space subspace L2-compact} or \eqref{Eq:Orlicz space subspace L2-discrete}. Let $(\Phi,\Psi)$ be a complementary pair of continuous strictly increasing Young functions. Define $\Phi_0(x)=\Phi(x^2)$ $(x\geq 0)$. It is clear that $\Phi_0$
is a continuous strictly increasing Young function whose complementary Young function $\Psi_0$ also has the same property (see the discussion in
\cite[Page 10]{rao}). Moreover,
$$\Phi(1)x^2\leq \Phi(x^2)=\Phi_0(x) \ \ \ (x\geq 1).$$
Thus, $\Phi_0$ satisfies \eqref{Eq:Orlicz space subspace L2-compact}. On the other hand, we can show that $\Psi_0$ satisfies~\eqref{Eq:Orlicz space subspace L2-discrete}. For this we choose $a>0$ such that $\Phi(1)<\frac1a$, and then for $0\le x\le\frac1a$ we will have
$$
\Phi_0(ax)=\Phi(a^2x^2)\le (\Phi(1)\,a^2)x^2,\quad \text{where}\ \ \Phi(1)\,a^2<a.
$$
Combining this with the Young inequality \eqref{Eq:Young inequality}, we obtain
$$
ax^2 \leq \Phi_0(ax)+\Psi_0(x)\leq (\Phi(1)\,a^2)x^2+\Psi_0(x)\quad \left(0\le x\le\frac1a\right).
$$
Hence,
$$
(a-\Phi(1)\,a^2)x^2 \leq \Psi_0(x) \quad \left(0\leq x\leq \frac1a\right).
$$
Similar results can also be obtained if we replace $\Phi_0$ with the Young function $\Phi_1(x)=\Phi(x)^2$ and $\Psi_0$ with $\Psi_1$, the complementary Young function of $\Phi_1$.
\end{rem}

We can now present many classes of twisted Orlicz algebras completely isomorphic to operator algebras on compactly generated groups of polynomial growth. In the case where the group is discrete, this can be compared with \cite[Theorem 3.1 and Theorem 3.5]{LSS}.

\begin{cor}\label{C:twisted lp alg-poly and exp weight-Operator alg}
Let $G$ be a compactly generated group of polynomial growth, $\Om\in \Zbw$,
and $\om$ be the weight associated to $\Om$. Suppose that $\om$ is either of the following weights:\\
$(i)$ $\om=\om_\beta$, the polynomial weight \eqref{Eq:poly weight-defn} with $1/\om \in L^2(G)$;\\
$(ii)$ $\om=\sg_{\alpha,C}$, the subexponential weight \eqref{Eq:Expo weight-defn};\\
$(iii)$ $\om=\rho_{\gamma,C}$, the subexponential weight \eqref{Eq:Expo weight II-defn}.

Then the following holds:\\
$(1)$ $(L^2(G)\cap L^\Phi(G),\tw)$ is completely isomorphic to an operator algebra.\\
$(2)$ If $G$ is compact and $\Phi$ satisfies \eqref{Eq:Orlicz space subspace L2-compact}, then $(L^\Phi(G),\tw)$ is completely isomorphic to an operator algebra.\\
$(3)$ If $G$ is discrete and $\Phi$ satisfies \eqref{Eq:Orlicz space subspace L2-discrete}, then $(l^\Phi(G),\tw)$ is completely isomorphic to an operator algebra.
\end{cor}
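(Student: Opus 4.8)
The plan is to derive all three assertions from Theorem~\ref{T:twisted Orlicz alg-operator alg}, whose hypotheses are that the underlying Orlicz space embeds in $L^2(G)$ and that $|\Om|$ admits a splitting $|\Om(s,t)|\le u(s)+v(t)$ with $u,v\in L^2(G)$. The splitting is supplied uniformly by Theorem~\ref{T:subexpo weight decreasing quotient1}: since $\Om\in\Zbw$ has $|\Om|(s,t)=\om(st)/(\om(s)\om(t))$ and each weight \eqref{Eq:poly weight-defn}, \eqref{Eq:Expo weight-defn}, \eqref{Eq:Expo weight II-defn} is of the form $\om(s)=e^{\rho(\tau(s))}$ with $\rho$ increasing, concave, and $\rho(0)=0$, we obtain $|\Om(s,t)|\le u(s)+u(t)$ with $u$ given by \eqref{Eq:subexpo weight decreasing quotient-2}. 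Taking $v=u$, everything reduces to checking that $u\in L^2(G)$ and that the ambient space lies in $L^2(G)$.

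The membership $u\in L^2(G)$ I would verify case by case. For the polynomial weight \eqref{Eq:poly weight-defn}, a direct simplification gives $u=\bigl((1+2\tau)/(1+\tau)^2\bigr)^\beta\le 2^\beta(1+\tau)^{-\beta}=2^\beta\om^{-1}$, so $u\in L^2(G)$ is immediate from the standing assumption $\om^{-1}\in L^2(G)$. For the subexponential weight \eqref{Eq:Expo weight-defn} one computes $u(s)=e^{-c\,\tau(s)^\alpha}$ with $c=C(2-2^\alpha)>0$, and for \eqref{Eq:Expo weight II-defn} one gets subexponential decay of the form $u(s)\le e^{-c\,\tau(s)/(\ln(1+\tau(s)))^{\gamma+1}}$ for large $\tau(s)$. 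In either subexponential case, summing $u^2$ over the level sets of the (integer-valued) length function and invoking the polynomial growth bound $\lambda(U^n)\le Cn^d$ dominates $\int_G u^2$ by a convergent series $\sum_n n^d\,u(n)^2$, since subexponential decay overwhelms the polynomial factor $n^d$. Thus $u\in L^2(G)$ in all three cases.

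For part (1), whose algebra is $L^2(G)\cap L^\Phi(G)$, I would realize this intersection as an Orlicz space by setting $\Phi'(x)=\Phi(x)+x^2$, equivalently the Young function generated by the density $\varphi(x)+2x$; then $\Phi'$ is again a continuous strictly increasing Young function with a complementary function of the same type, $L^{\Phi'}(G)=L^2(G)\cap L^\Phi(G)$ with equivalent norms, and the bound $\Phi'(x)\ge x^2$ forces $L^{\Phi'}(G)\subseteq L^2(G)$ through \eqref{Eq:Orlicz space subspace L2-non compact}. Applying Theorem~\ref{T:twisted Orlicz alg-operator alg} to $\Phi'$ with the pair $(u,u)$ shows $(L^{\Phi'}(G),\tw)$ is completely isomorphic to an operator algebra; since equivalent norms induce completely isomorphic maximal operator space structures on the same space, and complete isomorphism to an operator algebra is stable under complete isomorphism of algebras, the conclusion transfers to $(L^2(G)\cap L^\Phi(G),\tw)$.

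Parts (2) and (3) then follow because the intersection collapses: Proposition~\ref{P:Orlicz space subspace L2}(i) under \eqref{Eq:Orlicz space subspace L2-compact} (for compact $G$) and Proposition~\ref{P:Orlicz space subspace L2}(ii) under \eqref{Eq:Orlicz space subspace L2-discrete} (for discrete $G$) give $L^\Phi(G)\subseteq L^2(G)$, so that $L^2(G)\cap L^\Phi(G)=L^\Phi(G)$ and Theorem~\ref{T:twisted Orlicz alg-operator alg} applies to $L^\Phi(G)$ directly with $(u,u)$. The main obstacle throughout is the $L^2$-integrability of $u$: the polynomial-weight case rests on the clean domination $u\le 2^\beta\om^{-1}$, whereas the subexponential cases depend on the growth-versus-decay estimate, and in the case of \eqref{Eq:Expo weight II-defn} one must in addition confirm that the hypotheses of Theorem~\ref{T:subexpo weight decreasing quotient1} genuinely hold, i.e. that the sequence $\{\rho(n+1)-\rho(n)\}_{n\in\N_0}$ is decreasing for $\rho(x)=Cx/(\ln(1+x))^\gamma$; the integer-valuedness of $\tau$ makes only the tail behaviour of this sequence relevant to the final conclusion.
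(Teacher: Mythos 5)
Your proof is correct and follows essentially the same route as the paper: part (1) via the auxiliary Young function $\tilde{\Phi}(x)=x^2+\Phi(x)$ (your $\Phi'$) together with Proposition~\ref{P:Orlicz space subspace L2}(iii) and Theorem~\ref{T:twisted Orlicz alg-operator alg}, and parts (2) and (3) by collapsing the intersection through Proposition~\ref{P:Orlicz space subspace L2}(i) and (ii). In fact you do somewhat more than the paper's own very short proof, which leaves entirely implicit the verification that the splitting functions of Theorem~\ref{T:subexpo weight decreasing quotient1} lie in $L^2(G)$; your case-by-case check of this hypothesis (the domination $u\leq 2^\beta\om^{-1}$ for the polynomial weight, the growth-versus-decay estimate $\sum_n n^d u(n)^2<\infty$ for the subexponential ones, and the caveat that for $\rho(x)=Cx/(\ln(1+x))^\gamma$ only the tail of $\{\rho(n+1)-\rho(n)\}$ is decreasing, so a patch on a compact set is needed) supplies exactly the details the paper omits.
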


\begin{proof}
For (1), let $\tilde{\Phi}(x)=x^2+\Phi(x)$ $(x\geq 0)$. It is clear that $\tilde{\Phi}$ is a strictly increasing continuous Young function whose complementary Young function is also continuous and strictly increasing
\cite[Corollary 1.3.2 and Theorem 1.3.3]{rao}. Also, $L^2(G)\cap L^\Phi(G)$ is the Orlicz space associated to $\tilde{\Phi}$. Therefore, the result follows from Proposition \ref{P:Orlicz space subspace L2}(iii) and Theorem \ref{T:twisted Orlicz alg-operator alg}. The statements (2) and (3) follow from part (1) and
Proposition \ref{P:Orlicz space subspace L2}(i) and (ii).
\end{proof}

The method presented in Remark \ref{R:Young functions equivalent with parabola} is very useful in constructing Young functions satisfying \eqref{Eq:Orlicz space subspace L2-compact} or \eqref{Eq:Orlicz space subspace L2-discrete}. However, this is certainly not the only way as we see below.

\begin{rem}\label{Young functions-second derivative-operator alg}
Suppose that $\Phi''$ exists on $\R^+$. Since $\Phi'$ is continuous on $\R^+$ and $\Psi$ is strictly increasing and continuous, it follows from \cite[Corollary 1.3.2 and Theorem 1.3.3]{rao} that $\Phi_+'(0)=0$ and $\lim_{x\to \infty} \Phi'(x)=\infty$. Thus, by applying repeatedly the L'Hospital's rule, we get the following:

$(i)$ $\Phi$ satisfies \eqref{Eq:Orlicz space subspace L2-compact} if $\lim_{x\to \infty} \Phi''(x)\neq 0$;

$(ii)$ $\Phi$ satisfies \eqref{Eq:Orlicz space subspace L2-discrete} if
$\Phi_+''(0)\neq 0$;

$(iii)$ $\Phi$ satisfies \eqref{Eq:Orlicz space subspace L2-non compact} iff
 $\Phi$ satisfies both \eqref{Eq:Orlicz space subspace L2-compact} and  \eqref{Eq:Orlicz space subspace L2-discrete}.

 We can apply the above criterions to various Young functions such as a few we point out below:

(1) If $\Phi(x)=x^2/2+x^p/p$, then $L^\Phi(G)=L^2(G)\cap L^p(G)$. In this case, $\Phi$ satisfies  \eqref{Eq:Orlicz space subspace L2-compact} if $p\geq 2$ and it satisfies \eqref{Eq:Orlicz space subspace L2-discrete} if $1< p \leq 2$.

(2) If $\Phi(x)=x^\alpha\ln (1+x)$ $(\alpha \geq 1$), then $\Phi$ satisfies  \eqref{Eq:Orlicz space subspace L2-compact} if $\alpha >2$ and  it satisfies \eqref{Eq:Orlicz space subspace L2-discrete} if $1\leq \alpha \leq 2$.

(3) If $\Phi(x)=\cosh x-1$, then $\Phi$ satisfies \eqref{Eq:Orlicz space subspace L2-non compact}.

(4) If $\Phi(x)=e^x-x-1$, then $\Phi$ satisfies \eqref{Eq:Orlicz space subspace L2-non compact}.

(5) If $\Phi(x)=(1+x)\ln(1+x)-x$, then $\Phi$ satisfies \eqref{Eq:Orlicz space subspace L2-discrete}.

\end{rem}








\begin{exm}\label{E:twisted lp alg on integers-poly and exp weight-Operator alg}
Let $\Z^d$ be the group of $d$-dimensional integers.
A usual choice of generating set for $\Z^d$ is
	$$F=\{(x_1,\ldots, x_d) \mid x_i\in \{-1,0,1\} \}.$$
It is straightforward to see that
\begin{align}\label{Eq:growth generating set of integers}
|F^n|=(2n+1)^d \ \ \ (n=0,1,2,\ldots).
\end{align}
Now suppose that $1< p,q<\infty$ with $\frac{1}{p}+\frac{1}{q}=1$. Let $\om_\beta$ be the polynomial weight on $\Z^d$ defined in \eqref{Eq:poly weight-defn}. Then, by Theorem~\ref{T:subexpo weight decreasing quotient} and \cite[Theorem 3.2]{K3}, $l^p_{\om_\beta}(\Z^d)$ is a Banach algebra if and only if $\beta >d/q$. Thus, by Corollary \ref{C:twisted lp alg-poly and exp weight-Operator alg}, $l^p_{\om_\beta}(\Z^d)$ is completely isomorphic to an operator algebra if $1<p\leq 2$ and $\beta >d/2.$
On the other hand, $l^p_\om(\Z^d)$ is always completely isomorphic to an operator algebra if
$\om$ is either of the subexponential weights \eqref{Eq:Expo weight-defn} or \eqref{Eq:Expo weight II-defn}.
\end{exm}

\appendix

\section{An example}

\begin{exm}
We build a function $\rho:[0,\infty)\to[0,\infty)$ satisfying the following:\\
(i) $\rho(0)=0$ and $\rho$ is increasing;\\
(ii) $\rho$ is concave;\\
(iii) $\displaystyle \lim_{x\to\infty}\,\frac{\rho(x)}{x}=0$;\\
(iv) $\sum\limits_{n=1}^{\infty}\, e^{-\rho(n)}$ converges;\\
(v) $\sum\limits_{n=1}^{\infty}\, e^{\rho(2n)-2\rho(n)}$ diverges.
\end{exm}

\begin{proof}
In order for (iv) to be satisfied, we make sure that $\rho$ satisfies
\begin{equation}\label{1}
\rho(n)\ge2\ln n, \quad n\in\mathbb{N}.
\end{equation}
In this case $e^{-\rho(n)}\le e^{-2\ln n}=n^{-2}$, and so the series from (iv) will be majorized by convergent series $\sum\limits_{n=1}^{\infty} \frac{1}{n^2}$.

Next we notice that concavity of $\rho$ implies that the sequence $e^{\rho(2n)-2\rho(n)}$ is non-increasing. To prove this, it would be enough to show that for any $n\in\mathbb{N}\cup\{0\}$
$$
2\rho(n+1)-\rho(2n+2)\ge 2\rho(n)-\rho(2n)\quad\Leftrightarrow
$$
$$
2(\rho(n+1)-\rho(n))\ge(\rho(2n+2)-\rho(2n+1))+(\rho(2n+1)-\rho(2n)).
$$
Since $\rho$ is concave, we have that $\{\rho(n+1)-\rho(n)\}$ is a non-increasing sequence, which readily implies the above inequality.

It is known that for a non-increasing sequence $a_n\ge0$ convergence of the series $\sum\limits_{n=1}^{\infty} a_n$ implies that $\lim\limits_{n\to\infty} na_n=0$. Hence, in order to satisfy (v), it would be enough to make sure that $\lim\limits_{n\to\infty} ne^{\rho(2n)-2\rho(n)}\ne0$. For this, in turn, it would be enough to have the following:
\begin{equation}\label{2}
\forall N>0\  \exists\  n\ge N: 2\rho(n)-\rho(2n)=\ln n,
\end{equation}
in which case $ne^{\rho(2n)-2\rho(n)}=ne^{-\ln n}=1$.

We remark the following geometrical interpretation of the quantity $2\rho(n)-\rho(2n)$: if we connect the points $(n,\rho(n))$ and $(2n,\rho(2n))$ on the graph of $\rho$ with a straight line, then its $y$-intercept ($y$-coordinate of the point of intersection with the $y$-axis) will be equal to $2\rho(n)-\rho(2n)$. With this in mind, we pick any $n_1>2$ and consider the point $P_1(0,\ln n_1)$ on the $y$-axis. We build a tangent line to the graph of $2\ln x$ through $P_1$. Let this tangent touch the curve $y=2\ln x$ at the point $(x_1,2\ln x_1)$. Then, on one hand, the slope of this tangent is $2/x_1$, and on the other hand, it is equal to $(2\ln x_1-\ln n_1)/(x_1-0)$:
$$
\frac{2\ln x_1-\ln n_1}{x_1}=\frac{2}{x_1}\ \ \Rightarrow\ \ 2\ln x_1=\ln n_1+2\ \ \Rightarrow\ \ x_1=e^{\frac12\ln n_1+1}=e\sqrt n_1.
$$
We then define $\rho(x)$ on $[n_1,2n_1]$ so that its graph coincides with our tangent:
\begin{equation}\label{3}
\rho(x)=\ln n_1+\frac{2x}{e\sqrt{n_1}},\ x\in[n_1,2n_1].
\end{equation}
It follows that
\begin{equation}\label{5}
2\rho(n_1)-\rho(2n_1)=\ln n_1,
\end{equation}
and we also have
\begin{equation}\label{4}
\frac{\rho(n_1)}{n_1}=\frac{\ln n_1+\frac{2\sqrt{n_1}}{e}}{n_1}.
\end{equation}
We now want to choose $n_2>2n_1$ and build $\rho$ on $[n_2,2n_2]$ in the same way as we built it on $[n_1,2n_1]$. We need to make sure that the pieces of tangents can be connected in such a way that $\rho$ is concave on $[n_1,2n_2]$. The slope of $\rho$ on $[n_1,2n_1]$ is $\frac{2}{e\sqrt{n_1}}$ and the slope of $\rho$ on $[n_2,2n_2]$ is $\frac{2}{e\sqrt{n_2}}<\frac{2}{e\sqrt{n_1}}$. This means that we only need to make sure that the line containing the graph of $\rho$ on $[n_1,2n_1]$ and the line containing the graph of $\rho$ on $[n_2,2n_2]$ intersect at a point whose $x$-coordinate is between $2n_1$ and $n_2$. From (\ref{3}) we get that the coordinates $(t_1,s_1)$ of this point of intersection satisfy the following:
$$
s_1=\ln n_1+\frac{2t_1}{e\sqrt{n_1}}=\ln n_2+\frac{2t_1}{e\sqrt{n_2}}.
$$
Hence,
$$
t_1=\frac{\ln n_2-\ln n_1}{\frac{2}{e\sqrt{n_1}}-\frac{2}{e\sqrt{n_2}}}=\frac{e\ln\frac{n_2}{n_1}\sqrt{n_1n_2}}{2(\sqrt{n_2}-\sqrt{n_1})} =\frac{e\ln\frac{n_2}{n_1}(n_1\sqrt{n_2}+n_2\sqrt{n_1})}{2(n_2-n_1)}.
$$
We want $2n_1<t_1<n_2$, i.e.,
$$
4n_2n_1-4n_1^2\le e\ln\frac{n_2}{n_1}(n_1\sqrt{n_2}+n_2\sqrt{n_1})\le 2n_2^2-2n_2n_1.
$$
Comparing the orders of growth in $n_2$ in the three formulas above, we see that this inequality will be satisfied for large enough $n_2$. This guarantees that if we define $\rho$ on $(2n_1,n_2)$ by
$$
\rho(x)=\left\{\begin{array}{ll}
\ln n_1+\frac{2x}{e\sqrt{n_1}},\ & x\in(2n_1,t_1]\\
\ln n_2+\frac{2x}{e\sqrt{n_2}},\ & x\in(t_1,n_2)
\end{array}\right.,
$$
then $\rho$ will be concave on $[n_1,2n_2]$.

We can continue our process in the same way by choosing $n_3,n_4,\ldots$ and defining piecewise linear $\rho$ in the same manner. We can also define $\rho$ on $[0,n_1)$ so that it satisfies (i) and (ii).

Now we check that conditions (i)-(v) are satisfied for our function $\rho$. We already verified (ii), and (i) is fulfilled since the slopes of $\rho$ on all $[n_k,2n_k]$ are positive. Because (\ref{4}) holds for all $n_k$ and $n_k\to \infty$ ($n_{k+1}\ge2n_k$), we have that $\frac{\rho(n_k)}{n_k}\to0$. Since we already proved that $\rho$ is concave and $\rho(0)=0$, we know that $\frac{\rho(n)}{n}$ is decreasing, and so (iii) follows. Our function $\rho$ satisfies (\ref{1}) because the function $2\ln x$ is concave, and hence lies below all of its tangents. This asserts that (iv) holds for $\rho$. Finally, since (\ref{5}) takes place for every $n_k$ and $n_k\to\infty$, we have (\ref{2}) which implies (v).
\end{proof}

\end{document}